\documentclass[a4paper,12pt]{article}
\usepackage{latexsym,amsmath,amsthm,amssymb}
\usepackage{a4wide}
\usepackage{hyperref}
\usepackage{marginnote}
\usepackage{color}
\usepackage{cite}
\hypersetup{
pdftitle={Stability GNS}   
pdfauthor={Van Hoang Nguyen},
colorlinks = true,
linkcolor = magenta,
citecolor = blue,
}

\theoremstyle{plain}
\newtheorem{theorem}{Theorem}[section]


\newtheorem{proposition}[theorem]{Proposition}
\newtheorem{lemma}[theorem]{Lemma}
\newtheorem{corollary}[theorem]{Corollary}

\theoremstyle{definition}

\newtheorem{remark}[theorem]{Remark}



\renewcommand{\thefootnote}{\arabic{footnote}}

\def\R{\mathbb R}


\def\om{\omega}
\def\be{\beta}
\def\de{\delta}

\def\vphi{\varphi}
\def\na{\nabla}
\def\lt{\left}
\def\rt{\right}
\def\o{\overline}

\def\i0i{\int_0^\infty}

\def\deGNS{\delta_{\rm GNS}}
\def\hatde{\hat{\de}_{\rm GNS}}
\def\lamGNS{\lambda_{\rm GNS}}


\numberwithin{equation}{section}


\title{The sharp Gagliardo--Nirenberg--Sobolev inequality in quantitative form}
\author{Van Hoang Nguyen\footnote{
Institut de Math\'ematiques de Toulouse, Universit\'e Paul Sabatier, 118 Route de Narbonne, 31062 Toulouse c\'edex 09, France.}
}

\begin{document}
\maketitle


\renewcommand{\thefootnote}{}

\footnote{Email: \href{mailto: Van Hoang Nguyen <van-hoang.nguyen@math.univ-toulouse.fr>}{van-hoang.nguyen@math.univ-toulouse.fr}}

\footnote{2010 \emph{Mathematics Subject Classification\text}: 26D10.}

\footnote{\emph{Key words and phrases\text}: Gagliardo--Nirenberg--Sobolev inequality, weighted Sobolev inequality, stability estimates, sharp constant.}

\renewcommand{\thefootnote}{\arabic{footnote}}
\setcounter{footnote}{0}

\begin{abstract}
Using a dimension reduction argument and a stability version of the weighted Sobolev inequality on half space recently proved by Seuffert, we establish, in this paper, some stability estimates (or quantitative estimates) for a family of the sharp Gagliardo--Nirenberg--Sobolev inequalities due to Del Pino and Dolbeault \cite{DelPino}.
\end{abstract}

\section{Introduction}
The Gagliardo--Nirenberg--Sobolev (GNS) inequality in $\R^n$ with $n\geq 2$ asserts the existence of a positive constant $C$ such that
\begin{equation}\label{eq:GNS}
\|u\|_r \leq C\, \|\na u\|_p^{\theta} \, \|u\|_q^{1-\theta},
\end{equation}
where $p,q,r,\theta$ are parameters satisfying the conditions 
\[
1< p< n,\quad 1\leq q \leq r \leq p^*,\quad p^* = \frac{np}{n-p},
\]
and
\begin{equation*}
\frac1r = \frac{\theta}{p^*} + \frac{1-\theta}q,
\end{equation*}
and where $u$ is taken in $\mathcal D^{p,q}(\R^n)$ which is the completion of $C_0^\infty(\R^n)$ under the norm $\|u\|_{\mathcal D^{p,q}} = \|u\|_q + \|\na u\|_p$. Using variational argument and P\'olya--Szegg\"o principle \cite{Brothers}, we can show that the extremal functions for \eqref{eq:GNS} exist and are determined uniquely by a positive, decreasing and spherical symmetric function up to a multiple by a constant, to a translation and to a dilation. However, the explicit formula for the extremal functions and for the best constant in \eqref{eq:GNS} is still unknown except some special choice of parameters $p,q,r$. For example, when $q=r =p^*$ and $\theta =1$, \eqref{eq:GNS} reduces to the Sobolev inequality which the sharp constant and the set of extremal functions were found independently by Aubin \cite{Aubin} and Talenti \cite{Talenti} (see \cite{Rosen} for an earlier result in $\R^3$). For $p=r =2$, $q=1$, and $\theta = n/(n+2)$,\eqref{eq:GNS} reduces to the Nash inequality which the sharp constant $C_n$ was found by Carlen and Loss in \cite{CarlenLoss}.

Del Pino and Dolbeault \cite{DelPino} found the best constant and classified all extremal functions of the GNS inequality  for a special one parameter family of parameters $p,q,r$ with $p=2$, $q = t+1$ and $r =2t$ for $1<t < n/(n-2)$. More precisely, they proved the following inequality 
\begin{equation}\label{eq:DD}
\|u\|_{2t} \leq A_{n,t} \|\na u\|_2^{\theta}\, \|u\|_{p+1}^{1-\theta},\quad \theta = \frac{n(t-1)}{t[2n -(1+t)(n-2)]},
\end{equation}
for any $u\in \mathcal D^{2,t+1}(\R^n)$, with
\[
A_{n,t} = \lt(\frac{y(t-1)^2}{2\pi n}\rt)^{\theta/2} \lt(\frac{2y -n}{2y}\right)^{1/2t} \lt(\frac{\Gamma(y)}{\Gamma(y -n/2)}\rt)^{\theta/n},\qquad y = \frac{t+1}{t-1}, 
\]
and equality holds in \eqref{eq:DD} if and only if
\[
u(x) = c \lt(1 + |a(x-x_0)|^2\rt)^{-1/(t-1)},
\]
for some $c\in \R$, $a >0$ and $x_0\in \R^n$. When $t$ decreases to $1$, \eqref{eq:DD} reduces to an optimal Euclidean logarithmic Sobolev inequality which is equivalent to the famous logarithmic Sobolev inequality for Gaussian measure of Gross \cite{Gross}. The sharp GNS inequality for another one parameter family of parameters $p =2$, $q = 2t$ and $r =t+1$ with $0< t< 1$ also were proved in \cite{DelPino}. In \cite{DelPino03}, Del Pino and Dolbeault generalized their result in \cite{DelPino} to any $p\in (1,n)$. Another proofs of the results of Del Pino and Dolbeault and its generalization to any norm of gradient (not need Euclidean norm) were given in \cite{Bakry,Cordero,Nguyen15}.

Recent years, the problem of finding stability estimates for the sharp inequalities both in analysis and geometry such as isoperimetric inequality, Brunn--Minkowski inequality, Sobolev inequality, logarithmic Sobolev inequality, etc, were intensively studied. For example, the stability version of the Euclidean isoperimetric inequality was proved in \cite{Bonnesen,Figalliiso,Fuglede,Fusco08,Hall91,Hall,Maggi} while the quantitative form of the Brunn--Minkowski inequality was established in \cite{FigalliBMconvex,Figallisumset,FigalliBM}. We refer the reader to \cite{Barchiesi,Cianchigauss,Eldan} for the stability versions of the isoperimetric inequality in Gaussian spac and to \cite{CianchiPS,Fusco09} for the quantitative form of the P\'olya--Szeg\"o principle and of the Faber--Krahn type inequalities. The stability estimates for the Sobolev inequality in the bounded domains were first proved by Brezis and Lieb in \cite{Brezis}. Since the paper of Brezis and Lieb, there are many works on the stability form of the Sobolev inequality. For example, Bianchi and Egnell \cite{Bianchi} established a stability version for the $L^2-$Sobolev inequality in whole space $\R^n$ which answers affirmatively a question of Brezis and Lieb in \cite{Brezis}. The quantitative form of the $L^p-$Sobolev inequality with $p\not=2$ was proved by Cianchi, Fusco, Maggi and Pratelli \cite{Cianchi} and recently by Figalli and Neumayer \cite{Figalli}. The stability version of the Sobolev inequality on functions of bounded variation were studied by Cianchi \cite{CianchiBV}, Fusco, Maggi and Pratelli \cite{Fusco07}, and by Figalli, Maggi and Pratelli \cite{FigalliBV}. See also \cite{Chen,Lu,Loiudice} for the stability version of the other Sobolev type inequality (higher order and on Heisenberg group), and see \cite{Dolbeault11,Dolbeault13,Jankowiak} for the other improvement of the $L^2-$Sobolev inequality with the remainder involving to the Hardy--Littlewood--Sobolev inequality. The stability results for the logarithmic Sobolev inequality can be found in \cite{Bobkov,Dolbeault16,Indrei,Fathi}.


Contrary with the Sobolev inequality, a few stability version for the GNS inequality is known, e.g., \cite{Carlen13,Carlen14,Dolbeault13,Dolbeault16,Nguyen16,Ruffini,Seuffert2}. The fact that the GNS inequality involving three not two norms (as Sobolev inequality) makes difficulties to establish their stability version. This fact prevents any direct adaption of the proof of Bianchi and Egnell \cite{Bianchi} to any of the other cases of the GNS inequality for which the optimizers are known. Also, the proof based on the optimal transportation of measures \cite{Figalliiso} and on the symmetrization techniques \cite{Cianchi,FigalliBV,Fusco07,Fusco08} did not procedure any results in this situation. The first stability results for the GNS inequality were established by Carlen and Figalli \cite{Carlen13} and by Dolbeault and Toscani \cite{Dolbeault13}. In their interesting paper \cite{Carlen13}, Carlen and Figalli exploited a stability result of Bianchi and Egnell for the Sobolev inequality in $\R^4$ and a dimension reduction  argument introduced by Bakry \cite{Bakry} to establish some stability estimates for a special GNS inequality in $\mathcal D^{2,4}(\R^2)$ and then applied them to obtain the explicit convergence rate to equilibrium for the critical mass Keller--Segel equation and the stability estimate for the logarithmic Hardy--Littlewood--Sobolev inequality. They also mentioned in their paper that their method can be used to obtain the stability results for whole family of GNS inequality \eqref{eq:DD}. This was completely done in recent work of Seuffert \cite{Seuffert2} by using the technique of Carlen and Figalli and his stability version for the weighted Sobolev inequality on half space \cite{Seuffert1}. For $1< t < n/(n-2)$, denote $2(t) = 2(4t+n -nt)/(n+2 +2t -nt)$ and 
\begin{equation}\label{eq:deltamu}
\hatde[u] = A_{n,t}^{4t/2(t)} \|\na u\|_2^{\theta 4t/2(t)} \|u\|_{t+1}^{(1-\theta)4t/2(t)} - \|u\|_{2t}^{4t/2(t)}, \quad u\in \mathcal D^{2,t+1}(\R^n).
\end{equation}
Throughout this paper, for $a >0$ and $x_0\in \R^n$, we define
\[
v_{a,x_0}(x) = (1 + a^2 |x-x_0|^2)^{-1/(t-1)},
\]
and denote $v_{1,0}$ by $v$ for simplicity. It was proved by Seuffert that there exist positive constants $K_1$ and $\de_1$ depending only on $n$ and $t$ such that for any nonnegative function $u\in \mathcal D^{2,t+1}(\R^n)$ such that $\|u\|_{2t} = \|v\|_{2t}$ and $\hatde[u] \leq \de_1$, then
\begin{equation}\label{eq:Seuffert}
\inf_{a >0, x_0} \|u^{2t} -a^n v_{a,x_0}^{2t}\|_1 \leq K_1 \hatde[u]^{1/2}.
\end{equation}
When $t=3, n=2$, \eqref{eq:Seuffert} goes back to the result of Carlen and Figalli (Theorem $1.2$) in \cite{Carlen13}. The improved version of \eqref{eq:DD} (in the nonhomogeneous form) was established in \cite{Dolbeault13} by Dolbeault and Toscani using the nonlinear evolution equations (fast diffusion) and improved entropy--entropy product estimates. In \cite{Dolbeault16}, these improvements were reproved by a simple proof (by the same authors) and were applied to give a faster convergence of solutions toward the equilibrium in the porous medium equations. In \cite{Nguyen16}, the author gives another proof for the result of Dolbeault and Toscani using mass transportation method, and extend it for any $1< p< n$ and for any norm of gradient (not need Euclidean norm). In \cite{Carlen14}, Carlen, Frank and Lieb proved a stability result for a GNS inequality which does not belong to the family \eqref{eq:DD} by means of Bianchi and Egnell method. This result then is applied to give the stability estimates for the lowest eigenvalue of a Schr\"odinger operator.

Our aim in this paper is to provide the stability estimates for the GNS inequality \eqref{eq:DD}. To do this, let us introduce the \emph{GNS deficit functional\text} on $\mathcal D^{2,t+1}(\R^n)$ by
\begin{equation}
\de_{\rm GNS}[u] = \frac{A_{n,t} \|\na u\|_2^{\theta} \, \|u\|_{t+1}^{1-\theta}}{ \|u\|_{2t}} -1, \quad u \in \mathcal D^{2,t+1}(\R^n),
\end{equation}
if $u\not\equiv 0$ and $\deGNS[0] =0$. We also introduce the concept of asymmetry following Ruffini \cite{Ruffini} by
\begin{equation}\label{eq:asymmetry}
\lambda_{\rm GNS}[u] = \inf_{a >0, x_0\in \R^n} \Bigg{\|} \frac{\|v\|_{2t}}{\|u\|_{2t}} \, u - a^{\frac n{2t}}v_{a,x_0} \Bigg{\|}_{2t}^{2t},
\end{equation}
if $u\not\equiv 0$, and $\lamGNS[0] =0$. By \eqref{eq:DD}, $\deGNS[u] >0$ unless $u$ is a multiple of $v_{\lambda,x_0}$ for some $\lambda >0$ and some $x_0\in \R^n$. 

Our first result in this paper is the following.
\begin{theorem}\label{1stTheorem}
Let $n\geq 2$ and $1 < t < (2n+1)/(2n-3)$. Let $u\in \mathcal D^{2,t+1}(\R^n)$ be a nonnegative function such that 
\begin{equation}\label{eq:conditiononu}
\|u\|_{2t} = \|v\|_{2t},\qquad\text{ and }\qquad \frac{t^2-1}{2n}\|\na u\|_2^2 = \|u\|_{t+1}^{t+1}.
\end{equation}
Then there exists constants $K$ and $\delta$ depending only on $n$ and $t$ such that whenever $\hatde[u] \leq \de$,
\begin{equation}\label{eq:1ststability}
\inf_{x_0\in \R^n} \lt(\int_{\R^n} |\na u -\na v_{1,x_0}|^2 dx + \int_{\R^n} \lt|u^{\frac{t+1}2} -v_{1,x_0}^{\frac{t+1}2}\rt|^2 dx \rt)\leq K \hatde[u].
\end{equation}
\end{theorem}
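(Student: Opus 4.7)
The plan is to upgrade Seuffert's weak stability \eqref{eq:Seuffert} to the strong energy-type stability \eqref{eq:1ststability} by linearizing around an extremizer and exploiting a spectral gap, with the crucial input being the balance relation in \eqref{eq:conditiononu}, which eliminates the scaling invariance of the inequality.

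First, since $u\ge 0$, $\|u\|_{2t}=\|v\|_{2t}$, and $\hatde[u]\le \de_1$, Seuffert's estimate \eqref{eq:Seuffert} produces $a_*>0$ and $x_*\in\R^n$ with
\[
\|u^{2t}-a_*^n v_{a_*,x_*}^{2t}\|_1\le K_1\,\hatde[u]^{1/2}.
\]
The balance condition is scale-breaking: among the rescalings $\{a^{n/(2t)}v_{a,x_0}\}_{a>0}$ with the same $L^{2t}$-mass as $v$, exactly $a=1$ satisfies $\frac{t^2-1}{2n}\|\na(\cdot)\|_2^2=\|\cdot\|_{t+1}^{t+1}$, and the dependence on $a$ of this ratio is strictly monotone. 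Combining this with the $L^1$-closeness above, the uniform bounds on $\|\na u\|_2$ and $\|u\|_{t+1}$ implied by the smallness of $\hatde[u]$, and a continuity argument, one deduces $|a_*-1|\le C\hatde[u]^{\kappa}$ for some $\kappa>0$. Up to an acceptable error the nearly-optimal profile is therefore $v_{1,x_*}$.

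Next I would linearize $\hatde[u]$ around $v_{1,x_*}$. Writing $u=v_{1,x_*}+w$ and expanding to second order, the first-order contribution vanishes because $v_{1,x_*}$ is a Del Pino--Dolbeault extremizer. After the Emden--Fowler change of variables behind Bakry's dimension-reduction argument and the conjugation used by Carlen--Figalli \cite{Carlen13}, the resulting quadratic form in $w$ is essentially that of a Schr\"odinger-type operator with explicit spectrum, whose kernel is spanned by the $n$ translations and the single dilation generator. The infimum over $x_0$ in \eqref{eq:1ststability} removes the translations, while the balance condition \eqref{eq:conditiononu} eliminates the dilation direction at leading order. The residual spectral gap yields
\[
\hatde[u]\gtrsim \int_{\R^n}|\na w|^2\,dx+\int_{\R^n}v_{1,x_*}^{t-1}w^2\,dx,
\]
and the pointwise bound $|a^{(t+1)/2}-b^{(t+1)/2}|^2\lesssim v_{1,x_*}^{t-1}(a-b)^2+|a-b|^{t+1}$, applied with $a=u,\,b=v_{1,x_*}$, converts the weighted term into $\int|u^{(t+1)/2}-v_{1,x_*}^{(t+1)/2}|^2\,dx$, recovering \eqref{eq:1ststability}.

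The main obstacle I anticipate lies between Step 1 and the linearization: the weak $L^1$-closeness of $u^{2t}$ to $v_{1,x_*}^{2t}$ is a priori much weaker than the $\dot W^{1,2}\cap L^{t+1}$ closeness of $u$ to $v_{1,x_*}$ needed to make the Taylor expansion rigorous and to absorb the higher-order remainders. The standard device is a contradiction / concentration--compactness scheme: if \eqref{eq:1ststability} failed for every $K$, a sequence $u_k$ with $\hatde[u_k]\to 0$ saturating the failure would, via the uniform $\dot H^1$ and $L^{t+1}$ bounds together with the pinning of the scale carried out above, converge strongly to some $v_{1,x_\infty}$ along a subsequence, reducing the problem to the local spectral-gap regime and producing a contradiction. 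The restriction $t<(2n+1)/(2n-3)$ is expected to enter through the integrability of the weight $v^{t-1}$ and of the cubic remainder in the spectral analysis.
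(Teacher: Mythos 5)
Your proposal is a direct Bianchi--Egnell argument at the level of the GNS deficit (linearization around $v_{1,x_*}$ plus a spectral gap, seeded by Seuffert's $L^1$ estimate \eqref{eq:Seuffert}), and it has two genuine gaps. First, the pinning $|a_*-1|\le C\hatde[u]^{\kappa}$ does not follow from $\|u^{2t}-a_*^n v_{a_*,x_*}^{2t}\|_1\le K_1\hatde[u]^{1/2}$: closeness of the $2t$-powers in $L^1$ gives no control of $\|u\|_{t+1}$ or $\|\na u\|_2$ relative to the profile, since tails of $u$ carrying negligible $L^{2t}$ mass may carry order-one $L^{t+1}$ mass; the monotonicity-in-$a$ argument therefore does not close without additional uniform integrability, which is precisely why Theorem \ref{2ndTheorem} has to assume moment and entropy bounds. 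Second, and decisively, the step you yourself flag as the main obstacle is not a ``standard device'': to justify the second-order expansion and absorb the cubic remainders (which mix the $\dot W^{1,2}$, $L^{t+1}$ and $L^{2t}$ norms) you need strong $\dot W^{1,2}\cap L^{t+1}$ closeness of $u$ to the optimizer, and a contradiction/concentration--compactness scheme only yields weak limits from the uniform $\dot H^1$ and $L^{t+1}$ bounds; upgrading vanishing deficit to strong convergence of gradients is exactly the hard Bianchi--Egnell-type statement, and for the three-norm GNS functional this direct adaptation is the known obstruction pointed out in the introduction (following \cite{Carlen13}). In effect the compactness step assumes the qualitative core of the stability you are trying to prove, and the asserted ``explicit spectrum'' and kernel of the linearized operator, as well as the positivity of the second variation of $\hatde$ off the symmetry directions, are never established.

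The paper avoids both issues by never linearizing: it lifts $u$ to $f$ on $\R^{n+1}_+$ via \eqref{eq:defineoff}, proves the exact identity of Proposition \ref{bridge} equating $D(n_s,s)^{2/2(t)}\hatde[u]$ with the weighted Sobolev deficit of $f$ (here both normalizations in \eqref{eq:conditiononu} are used), and then invokes Seuffert's Bianchi--Egnell stability \eqref{eq:stabweighted} on the half space --- a strong, gradient-norm estimate that your proposal never uses (you only use the weaker consequence \eqref{eq:Seuffert}). Lemma \ref{enlarging} shows, by elementary measure-theoretic estimates exploiting \eqref{eq:conditiononu}, that the infimum may be taken at $c=a=1$ up to a constant, and the final step bounds $\|\na f-\na g_{1,1,x_0}\|_2^2$ from below pointwise-in-$x$ (the $I$/$II$ decomposition) by the two quantities in \eqref{eq:1ststability}. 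If you want to salvage your route, the missing strong-compactness input would have to come from \eqref{eq:stabweighted} anyway, at which point the paper's direct transfer is both shorter and quantitative without any spectral analysis.
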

The restriction $1< t< (2n+1)/(2n-3)$ comes from the fact that the dimension reduction argument does not implies the full family of GNS inequality \eqref{eq:DD} as mentioned in \cite{Nguyen15}. Since the functional $\deGNS$ is invariant under the change of function $u$ to $\lambda^{n/2t} u(\lambda \cdot)$ for $\lambda >0$, hence we always can choose a $\lambda >0$ such that the second condition in \eqref{eq:conditiononu} holds. Comparing with the result of Carlen and Figalli, and of Seuffert, we see that \eqref{eq:1ststability} gives us a lower bound of $\deGNS[u]$ in terms of $\|\na u -\na v_{1,x_0}\|_2^2$ and of $\|u^{(t+1)/2} -v_{1,x_0}^{(t+1)/2}\|_2^2$ for some $x_0\in \R^n$. We will show that \eqref{eq:1ststability} actually implies \eqref{eq:Seuffert}. 

Another consequence of Theorem \ref{1stTheorem} is the following quantitative form of \eqref{eq:DD}.
\begin{corollary}\label{quantitativeform}
Let $n\geq 2$ and $1< t< (2n+1)/(2n-3)$. There exists a constant $C >0$ depending only on $n,t$ such that for any $u\in \mathcal D^{2,t+1}(\R^n)$, the following estimate
\begin{equation}\label{eq:quantiativeform}
\lamGNS[u]^{\frac{t+1}{t[2(1-\theta) + (t+1)\theta]}} \leq C \deGNS[u],
\end{equation}
holds.
\end{corollary}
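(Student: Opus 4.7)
The plan is to deduce the corollary from Theorem \ref{1stTheorem} via a scale-invariant normalization, a dichotomy on the size of the deficit, and a Sobolev/interpolation argument. I carry out the argument for nonnegative $u$ (which is what Theorem \ref{1stTheorem} applies to); the case of signed $u$ reduces to this via a triangle-inequality comparison involving $\|u_-\|_{2t}$.

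Both $\deGNS$ and $\lamGNS$ are invariant under the rescaling $u\mapsto c\,\lam^{n/(2t)}u(\lam\cdot)$ with $c,\lam>0$, while the ratio $\|\na u\|_2^2/\|u\|_{t+1}^{t+1}$ transforms as a nontrivial power of $\lam$. Hence we may normalize $u\ge 0$ to satisfy \eqref{eq:conditiononu}. Fix the threshold $\de$ of Theorem \ref{1stTheorem}. From the identity $\hatde[u] = \|u\|_{2t}^{4t/2(t)}\bigl[(1+\deGNS[u])^{4t/2(t)}-1\bigr]$ (cf.\ \eqref{eq:deltamu}) together with $\|u\|_{2t}=\|v\|_{2t}$, the regime $\hatde[u]>\de$ forces $\deGNS[u]\ge c_0>0$, whereupon $\lamGNS[u]\le 2^{2t}\|v\|_{2t}^{2t}$ (triangle inequality) makes \eqref{eq:quantiativeform} trivial. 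So assume $\hatde[u]\le\de$; Theorem \ref{1stTheorem} then produces $x_0\in\R^n$ with
\[
\|\na(u-v_{1,x_0})\|_2^2 + \|u^{(t+1)/2}-v_{1,x_0}^{(t+1)/2}\|_2^2\le K\hatde[u].
\]

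The key elementary fact is $(x+y)^p\ge x^p+y^p$ for $x,y\ge 0,\ p\ge 1$, which, applied with $p=(t+1)/2$ and $x=a-b,\ y=b$, yields $(a-b)^{(t+1)/2}\le a^{(t+1)/2}-b^{(t+1)/2}$ for $a\ge b\ge 0$. Hence $|u-v_{1,x_0}|^{t+1}\le|u^{(t+1)/2}-v_{1,x_0}^{(t+1)/2}|^2$ pointwise, so $\|u-v_{1,x_0}\|_{t+1}^{t+1}\le K\hatde[u]$. Combined with the Sobolev inequality $\|u-v_{1,x_0}\|_{2n/(n-2)}\le C_S\|\na(u-v_{1,x_0})\|_2\le C_S(K\hatde[u])^{1/2}$, and the fact that $t+1<2t<2n/(n-2)$ under $1<t<n/(n-2)$, the standard Lebesgue interpolation
\[
\|u-v_{1,x_0}\|_{2t}\le\|u-v_{1,x_0}\|_{t+1}^{1-\theta}\,\|u-v_{1,x_0}\|_{2n/(n-2)}^{\theta}
\]
holds with the \emph{same} $\theta$ as in \eqref{eq:DD}; this is immediate from $\frac{1}{2t}=\frac{1-\theta}{t+1}+\frac{\theta(n-2)}{2n}$, i.e., the defining relation of $\theta$ with $r=2t,\ q=t+1,\ p^*=2n/(n-2)$. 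Raising to the $2t$-th power,
\[
\|u-v_{1,x_0}\|_{2t}^{2t}\le K'\,\hatde[u]^{\,t[2(1-\theta)+(t+1)\theta]/(t+1)}.
\]

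Finally, choosing $a=1$ and $x_0$ as above in the infimum defining $\lamGNS$ gives $\lamGNS[u]\le\|u-v_{1,x_0}\|_{2t}^{2t}$, and in the present regime $\deGNS[u]\le$ const one has $\hatde[u]\le C_0\deGNS[u]$ from $(1+x)^r-1\le C_r x$ on bounded intervals. Combining yields $\lamGNS[u]\le C\deGNS[u]^{t[2(1-\theta)+(t+1)\theta]/(t+1)}$, which is \eqref{eq:quantiativeform}. The main obstacle is the exponent bookkeeping (checking that the interpolation exponent matches $\theta$ of \eqref{eq:DD}); no ideas beyond Theorem \ref{1stTheorem}, the elementary $p$-th-power inequality, Sobolev embedding, and $L^p$-interpolation are needed.
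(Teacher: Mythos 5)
Your strategy for nonnegative $u$ follows the paper closely, but there are two genuine gaps.

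First, your estimate $\|u-v_{1,x_0}\|_{2n/(n-2)}\le C_S\|\na(u-v_{1,x_0})\|_2$ invokes the critical Sobolev embedding $\dot W^{1,2}\hookrightarrow L^{2n/(n-2)}$, which fails when $n=2$ (the exponent degenerates to $\infty$ and no such embedding holds). Yet the corollary is claimed for all $n\ge 2$. The paper avoids this by applying the GNS inequality \eqref{eq:DD} directly to $w=u-v_{1,x_0}$: $\|w\|_{2t}\le A_{n,t}\|\na w\|_2^\theta\|w\|_{t+1}^{1-\theta}$, which is valid for all $n\ge 2$ and immediately yields the same exponent $2t[\theta/2+(1-\theta)/(t+1)]$. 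Your factorization of GNS into Sobolev plus $L^p$-interpolation is what introduces the spurious obstruction; using the GNS inequality as a black box removes it.

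Second, and more seriously, you dismiss the signed case with ``a triangle-inequality comparison involving $\|u_-\|_{2t}$.'' Writing $u=|u|-2u_-$ and applying the triangle inequality only gives $\lamGNS[u]^{1/(2t)}\le\lamGNS[|u|]^{1/(2t)}+2(\|v\|_{2t}/\|u\|_{2t})\|u_-\|_{2t}$; this reduces the problem but does not solve it, because nothing so far controls $\|u_-\|_{2t}$ in terms of $\deGNS[u]$. Establishing that control is a nontrivial step. The paper does it (following Cianchi--Fusco--Maggi--Pratelli \cite{Cianchi}): apply GNS separately to $u_+$ and $u_-$, raise each to the power $p=2(t+1)/(\theta(t+1)+2(1-\theta))$, sum, and exploit the strict concavity of $(a,b)\mapsto a^\alpha b^{1-\alpha}$ together with $\|\na u_+\|_2^2+\|\na u_-\|_2^2=\|\na u\|_2^2$ and $\|u_+\|_{t+1}^{t+1}+\|u_-\|_{t+1}^{t+1}=\|u\|_{t+1}^{t+1}$ to derive
\[
\lt[\lt(\tfrac{\|u_+\|_{2t}^{2t}}{\|u\|_{2t}^{2t}}\rt)^{p/(2t)}+\lt(\tfrac{\|u_-\|_{2t}^{2t}}{\|u\|_{2t}^{2t}}\rt)^{p/(2t)}\rt]^{1/p}-1\le\deGNS[u],
\]
and then a concavity estimate for $\vphi(a)=(a^{p/(2t)}+(1-a)^{p/(2t)})^{1/p}-1$ near its endpoints shows $\min\{\|u_+\|_{2t}^{2t},\|u_-\|_{2t}^{2t}\}/\|u\|_{2t}^{2t}\le C\deGNS[u]^{2t/p}$. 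You need this (or an equivalent idea) to close the argument; the triangle inequality alone cannot produce a bound on $\|u_-\|_{2t}$.
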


The power $(t+1)/[t(2(1-\theta) + (t+1)\theta)]$ of $\lamGNS[u]$ in \eqref{eq:quantiativeform} is not sharp. Its sharp value should be $1/t$. We next prove a similar result for the density $u^{t+1}$. To do so, we need to require additional some priori bounds ensuring some uniform integrability of the class of densities satisfying the bounds. It is natural to use moment bounds and entropy bounds (as done in \cite{Carlen13}).

Define
\begin{equation}\label{eq:NSdefine}
N_p(u) = \int_{\R^n} |y|^p u^{t+1}(y) dy,\qquad S(u) = \int_{\R^n} u^{t+1} \ln(u^{t+1}) dy.
\end{equation}
\begin{theorem}\label{2ndTheorem}
Let $n\geq 2$ and $1< t< (2n+1)/(2n-3)$. Let $u\in \mathcal D^{2,t+1}(\R^n)$ be a nonnegative function such that $\|u\|_{t+1} = \|v\|_{t+1}$. Suppose that for some $A, B < \infty$ and $1< p < 2(t+1)/(t-1) -n$, 
\begin{equation}\label{eq:bounds}
S(u) \leq A, \qquad N_p(u) \leq B,
\end{equation}
and assume also that 
\begin{equation}\label{eq:centercond}
\int_{\R^n} x \, u(x)^{t+1} dx = 0.
\end{equation}
Then there exists constants $K_2, \delta_2$ depending only on $n,t,p, A$ and $B$  such that whenever $\hatde[u] \leq \de_2$,
\begin{equation}\label{eq:2ndstability}
\inf_{a > 0} \|u^{t+1} -a^{n} v_{a,0}^{t+1}\|_1 \leq K_2 \, \hatde[u]^{(p-1)/(2p)}.
\end{equation}
\end{theorem}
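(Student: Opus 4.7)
The plan is to reduce Theorem~\ref{2ndTheorem} to Theorem~\ref{1stTheorem} via a two-parameter rescaling, turn the resulting $L^2$-type bound on $w^{(t+1)/2}$ into an $L^1$ bound on $u^{t+1}$, and use the centering condition \eqref{eq:centercond} together with the $p$-th moment bound to eliminate the translation parameter. Concretely, I set $w(x) = c\,\mu^{n/(t+1)} u(\mu x)$ and choose $(\mu,c)\in (0,\infty)^2$ so that $w$ satisfies the two normalization conditions of Theorem~\ref{1stTheorem}, $\|w\|_{2t} = \|v\|_{2t}$ and $\tfrac{t^2-1}{2n}\|\na w\|_2^2 = \|w\|_{t+1}^{t+1}$. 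A direct scaling computation reduces this to a pair of algebraic equations in $(\mu,c)$ with a unique solution; using $\|u\|_{t+1}=\|v\|_{t+1}$, $S(u)\leq A$, $N_p(u)\leq B$ (which prevent $u^{t+1}$ from either concentrating or spreading) and the smallness of $\hatde[u]$, one shows that $\mu$ and $c$ lie in a compact subinterval of $(0,\infty)$ depending only on $n,t,p,A,B$, whence $\hatde[w]\leq C\hatde[u]$.

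For $\de_2$ small enough Theorem~\ref{1stTheorem} then provides $x_0^\ast\in\R^n$ with $\|w^{(t+1)/2} - v_{1,x_0^\ast}^{(t+1)/2}\|_2^2 \leq K\hatde[u]$. Factoring $w^{t+1} - v_{1,x_0^\ast}^{t+1} = (w^{(t+1)/2} - v_{1,x_0^\ast}^{(t+1)/2})(w^{(t+1)/2} + v_{1,x_0^\ast}^{(t+1)/2})$ and applying Cauchy--Schwarz, with the second factor controlled in $L^2$ by $\|w\|_{t+1} + \|v\|_{t+1}$, yields $\|w^{t+1} - v_{1,x_0^\ast}^{t+1}\|_1 \leq C\hatde[u]^{1/2}$. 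Undoing the rescaling by $y=\mu x$, setting $a = 1/\mu$ and $y_0 = \mu x_0^\ast$, and absorbing the scalar $c$ (whose closeness to $1$, $|c^{t+1}-1|\leq C\hatde[u]^{1/2}$, follows from integrating the previous bound) gives $\|u^{t+1} - a^n v_{a,y_0}^{t+1}\|_1 \leq C\hatde[u]^{1/2}$.

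To remove $y_0$, a direct change of variables yields $\int x\, a^n v_{a,y_0}^{t+1}(x)\,dx = y_0\|v\|_{t+1}^{t+1}$, so \eqref{eq:centercond} together with the interpolation $\int|x||f|\,dx \leq \bigl(\int|x|^p|f|\,dx\bigr)^{1/p}\|f\|_1^{(p-1)/p}$ gives
\[
|y_0|\,\|v\|_{t+1}^{t+1} \leq \Bigl(\int |x|^p\bigl|a^n v_{a,y_0}^{t+1} - u^{t+1}\bigr|\,dx\Bigr)^{1/p}\,\bigl\|a^n v_{a,y_0}^{t+1} - u^{t+1}\bigr\|_1^{(p-1)/p}.
\]
The first factor is controlled by $N_p(u)^{1/p} + \bigl(\int|x|^p a^n v_{a,y_0}^{t+1}\,dx\bigr)^{1/p}$; the last integral is finite thanks to the hypothesis $p < 2(t+1)/(t-1) - n$ (which makes $\int|x|^p v^{t+1}<\infty$) and is uniformly bounded after a short bootstrap showing $|y_0|$ is a priori small. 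Hence $|y_0|\leq C\hatde[u]^{(p-1)/(2p)}$, and combined with the $L^1$-translation continuity $\|a^n(v_{a,y_0}^{t+1} - v_{a,0}^{t+1})\|_1 \leq C|y_0|$, the triangle inequality gives
\[
\inf_{a>0}\|u^{t+1} - a^n v_{a,0}^{t+1}\|_1 \leq C\hatde[u]^{1/2} + C\hatde[u]^{(p-1)/(2p)} \leq K_2\hatde[u]^{(p-1)/(2p)},
\]
since $(p-1)/(2p)<1/2$ and $\hatde[u]$ is small.

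The main obstacle will be the rescaling step: showing quantitatively that the entropy and moment bounds (in addition to $\|u\|_{t+1}=\|v\|_{t+1}$) force both $\mu$ and $c$ to lie in a compact range depending only on $n,t,p,A,B$, so that the subsequent application of Theorem~\ref{1stTheorem} and the uniform control of $\int|x|^p a^n v_{a,y_0}^{t+1}\,dx$ can be carried out. The moment--center-of-mass interpolation is the conceptual heart of the argument and is what produces the specific exponent $(p-1)/(2p)$.
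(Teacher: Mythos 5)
Your proposal follows essentially the same route as the paper's proof: rescale $u$ (by a dilation and a multiplicative constant) so that the two normalizations of Theorem \ref{1stTheorem} hold with a comparable deficit, pass from the $L^2$ bound on the $(t+1)/2$-powers to an $L^1$ bound on the densities by Cauchy--Schwarz, and then eliminate the translation through the centering condition \eqref{eq:centercond} and the $p$-moment H\"older interpolation, which is exactly the mechanism that yields the exponent $(p-1)/(2p)$. The compactness of the scaling parameters that you defer as ``the main obstacle'' is precisely the content of the paper's Steps 1, 3 and 5 — H\"older on a ball together with $N_p(u)\leq B$ gives the lower bound on $\|u\|_{2t}$, hence the deficit comparison and (once $c$ is close to $1$) the upper bound on $\mu$, while Jensen's inequality together with $S(u)\leq A$ and the standard bound on $\int u^{t+1}\ln_- u^{t+1}$ by the $p$-moment gives the lower bound on $\mu$, i.e., the upper bound on the bubble scale $a$ — so your plan is sound and differs from the paper only in minor implementation details (moment of the bubble versus moment of $u$, translation continuity of the bubble versus of the rescaled $u$).
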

Note that $v(x) \sim |x|^{-2/(t-1)}$ then the condition $p< 
2(t+1)/(t-1) -n$ is rather natural for the finite of $N_p(u)$. The case $n=2, t=3$, Theorem \ref{2ndTheorem} is exactly Theorem $1.4$ of Carlen and Figalli in \cite{Carlen13}. However, the order of $\hatde[u]$ in our Theorem \ref{2ndTheorem} is better than the one in Theorem $1.4$ of Carlen and Figalli which value is $(p-1)/(4p)$. As an application of our improvement in Theorem \ref{2ndTheorem}, we can improve the stability result for the Log--HLS inequality and the convergence rate to equilibrium for the solution of the Keller--Segel equation established by Carlen and Figalli in \cite{Carlen13} (at least twice).

Let us explain how to prove these results. Our method used in this paper is the modification of the one given by Carlen and Figalli \cite{Carlen13} and Seuffert \cite{Seuffert2}. We combining the stability version of the weighted Sobolev inequality established in \cite{Seuffert1} by Seuffert and the dimension reduction argument of Bakry \cite{Bakry} to obtain Theorem \ref{1stTheorem}. The main different between our proof and the one of Carlen and Figalli, and of Seuffert is that after applying the stability version of the weighted Sobolev inequality on the half space, we do not apply the weighted Sobolev inequality to the remainder term. Instead of this, we make some computations to control the remainder term when the deficit is small. Theorem \ref{1stTheorem} then follows by the special form of the functions which we define on the half space.

One of the main ingredients in our proof is the stability version of the weighted Sobolev inequality on half space $\R^{n+1}$ due to Seuffert \cite{Seuffert1}. The sharp weighted Sobolev inequality on half space was proved by the author in \cite{Nguyen15} by means of the mass transportation technique which generalizes one result of Bakry, Gentil and Ledoux in \cite{Bakry}. By adapting a dimension reduction argument due to Bakry, the author derived a subfamily of GNS inequality due to Del Pino and Dolbeault \cite{DelPino,DelPino03} (for any $1< p < n$ and even for any norm of gradient) from the weighted Sobolev inequality on half space. Let $1< t < (2n+1)/(2n-3)$, denote 
\begin{equation}\label{eq:fractionaldimension}
s= \frac{2n+1 -(2n-3)t}{t-1},\quad n_s = n+s+1,\quad\text{and}\quad 2_s^* = \frac{2n_s}{n_s-2}.
\end{equation}  
It was proved by the author in \cite{Nguyen15} (see also \cite{Bakry}) that the following inequality
\begin{equation}\label{eq:weightedSobolev}
\lt(\int_{\R_+^{n+1}} |f(x,y)|^{2_s^*} y^s dxdy\rt)^{\frac 2{2_s^*}} \leq S_{n,s} \int_{\R_+^{n+1}} |\na f(x,y)|^2 y^s dx dy,
\end{equation}
holds with the sharp constant $S_{n,s}$ (its explicit value can be found in \cite{Nguyen15}), and the equality holds if and only if 
\begin{equation}\label{eq:exfunction}
f(x,y) = c\, a^{\frac{n_s-2}2}\lt(1 + a^2 |x-x_0|^2 + a^2 y^2\rt)^{-\frac{n_s-2}2} =: g_{c,a,x_0}(x,y)
\end{equation}
for some $c\in \R$, $a >0$ and $x_0\in \R^n$. In \cite{Seuffert1}, by adapting the proof of Bianchi and Egnell, Seuffert established a stability version of \eqref{eq:weightedSobolev} as follows
\begin{multline}\label{eq:stabweighted}
S_{n,s} \int_{\R_+^{n+1}} |\na f(x,y)|^2 y^s dx dy - \lt(\int_{\R_+^{n+1}} |f(x,y)|^{2_s^*} y^s dxdy\rt)^{\frac 2{2_s^*}} \\
\geq C \inf_{c\in \R, a >0, x_0\in \R^n} \int_{\R^{n+1}_+} \lt|\na\lt(u(x,y) -g_{c,a,x_0}(x,y)\rt)\rt|^2 y^s dx dy,
\end{multline}
for some constant $C$ depending only on $n$ and $t$. The inequality \eqref{eq:stabweighted} plays an important role in the work of Seffert \cite{Seuffert2} and in our work in this paper.

The rest of this paper is organized as follows. The next section \S2 is devoted to prove the stability results in Theorem \ref{1stTheorem} and Corollary \ref{quantitativeform}. We also show how Theorem \ref{1stTheorem} implies the results of Carlen and Figalli and of Seuffert in this section. The proof of Theorem \ref{2ndTheorem} is given in section \S3.

\section{Proof of Theorem \ref{1stTheorem}}
\subsection{From weighted Sobolev to GNS inequality}
We begin by explaining the argument deriving the GNS inequality from the weighted Sobolev inequality on the half space \cite{Bakry,Nguyen16}. Let $u \in \mathcal D^{2,t+1}(\R^n)$ be a nonnegative function satisfying \eqref{eq:conditiononu}, we define a new function on $\R^{n+1}_+$ by
\begin{equation}\label{eq:defineoff}
f(x,y) = (u(x)^{1-t} + y^2)^{-\frac{n_s-2}2}.
\end{equation}
For $b > -1$, $a > 0$ and $2a-b >1$, denote
\[
D(a,b) = \int_0^\infty (1+r^2)^{-a} r^b dr = \frac12 \frac{\Gamma((b+1)/2) \Gamma((2a-b-1)/2)}{\Gamma(a)}.
\]
Note that 
\begin{equation}\label{eq:a*}
D(a,b+2) = \frac{b+1}{2a-b-3} D(a,b).
\end{equation}
Then we have the following result.

\begin{proposition}\label{bridge}
Let $u \in \mathcal D^{2,t+1}(\R^n)$ be a nonnegative function satisfyting \eqref{eq:conditiononu}. Suppose that $f$ is defined as \eqref{eq:defineoff}, then we have
\begin{align}\label{eq:bridge}
D(n_s,s)^{\frac2{2(t)}}\hatde[u]=S_{n,s}\int\limits_{\R^{n+1}_+} |\na f(x,y)|^2 y^s dx dy - \lt(\int\limits_{\R^{n+1}_+} f(x,y)^{2_s^*} y^s dx dy\rt)^{\frac 2{2_s^*}}.
\end{align}
\end{proposition}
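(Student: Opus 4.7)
The plan is a direct algebraic verification. I would substitute the ansatz $f(x,y)=(u(x)^{1-t}+y^2)^{-(n_s-2)/2}$ into the right-hand side of \eqref{eq:bridge}, evaluate the two integrals separately using the beta-type integrals $D(a,b)$ and a single radial substitution, and then match coefficients using the normalization \eqref{eq:conditiononu}. Throughout, the work reduces to checking a handful of algebraic identities stemming from the definitions in \eqref{eq:fractionaldimension}.

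For the $L^{2_s^*}$-term, note that by design $f^{2_s^*}=(u^{1-t}+y^2)^{-n_s}$. Substituting $y=u(x)^{(1-t)/2}r$ in the $y$-integral factors out the power $u^{(t-1)(n_s-(s+1)/2)}$; a short check from \eqref{eq:fractionaldimension} yields $(t-1)(n_s-(s+1)/2)=2t$, so $\int f^{2_s^*} y^s\,dxdy = D(n_s,s)\,\|u\|_{2t}^{2t}$. Similarly the identities in \eqref{eq:fractionaldimension} give $2_s^*=2(t)$, hence $(\int f^{2_s^*}y^s\,dxdy)^{2/2_s^*}=D(n_s,s)^{2/2(t)}\|u\|_{2t}^{4t/2(t)}$, which matches the $-\|u\|_{2t}^{4t/2(t)}$ contribution inside $D(n_s,s)^{2/2(t)}\hatde[u]$.

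For the gradient term, the chain rule gives $|\nabla f|^2=(n_s-2)^2(u^{1-t}+y^2)^{-n_s}\bigl[\tfrac{(t-1)^2}{4}u^{-2t}|\nabla u|^2+y^2\bigr]$. The same substitution, together with the companion identity $(t-1)(n_s-(s+3)/2)=t+1$, yields
\[
\int |\nabla f|^2 y^s\,dxdy = (n_s-2)^2 D(n_s,s)\Big[\tfrac{(t-1)^2}{4}\|\nabla u\|_2^2 + \tfrac{D(n_s,s+2)}{D(n_s,s)}\|u\|_{t+1}^{t+1}\Big].
\]
Applying the recursion \eqref{eq:a*} and the elementary computations $2n_s-s-3=2(t+1)/(t-1)$, $s+1=2(n-(n-2)t)/(t-1)$, and then using \eqref{eq:conditiononu} to convert $\|u\|_{t+1}^{t+1}$ to $(t^2-1)\|\nabla u\|_2^2/(2n)$, the bracket collapses to $(t-1)^2 n_s/(4n)\cdot\|\nabla u\|_2^2$. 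Hence $\int|\nabla f|^2 y^s\,dxdy = \tfrac{(n_s-2)^2(t-1)^2 n_s}{4n} D(n_s,s)\|\nabla u\|_2^2$.

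Finally, multiplying by $S_{n,s}$, it remains to check that this equals $D(n_s,s)^{2/2(t)}A_{n,t}^{4t/2(t)}\|\nabla u\|_2^{\theta\cdot 4t/2(t)}\|u\|_{t+1}^{(1-\theta)\cdot 4t/2(t)}$. Under \eqref{eq:conditiononu} the norm product is a pure power of $\|\nabla u\|_2$; a brief computation using the Del Pino--Dolbeault exponent identity $(t-1)\theta+2=2(t)(t+1)/(2t)$ shows that the resulting power is exactly $2$. The claim then reduces to an equality of explicit constants, which is precisely the relation between $S_{n,s}$ and $A_{n,t}$ underlying the dimension-reduction derivation of \eqref{eq:DD} from \eqref{eq:weightedSobolev} in \cite{Bakry,Nguyen15}. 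The main (entirely routine) obstacle is this final constant matching, involving $\Gamma$-function identities; it can be quoted from \cite{Nguyen15} rather than redone.
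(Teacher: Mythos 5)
Your proposal is correct and follows essentially the same route as the paper: the same radial substitution $y=u^{(1-t)/2}r$ reduces both integrals to $D(n_s,\cdot)$-weighted norms of $u$, the recursion \eqref{eq:a*} together with \eqref{eq:conditiononu} collapses the gradient integral, and the exponent identity is exactly the paper's \eqref{eq:canbang} in disguise. The only cosmetic difference is the final constant matching: you propose to quote the relation between $S_{n,s}$ and $A_{n,t}$ from \cite{Nguyen15}, whereas the paper makes the proof self-contained by plugging $u=v$ into the identity (for which both sides vanish by optimality of $v$) to read off the constant; either device is fine.
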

Proposition \ref{bridge} provides a bridge between the GNS inequality \eqref{eq:DD} and the sharp weighted Sobolev inequality on the half space \eqref{eq:weightedSobolev}. Our interest in this proposition is that it relates the GNS deficit to the Sobolev deficit. We will give a quick proof of this proposition below.
\begin{proof}
It is easy to check that
\begin{equation}\label{eq:canbang}
\frac{2 \theta t}{2(t)} + \frac{4(1-\theta) t}{(1+t) 2(t)} = 1.
\end{equation}
By a suitable change of variable, we get
\begin{equation*}
\int_{\R^{n+1}_+} f(x,y)^{2_s^*} y^s dx dy = D(n_s,s) \int_{\R^n} u(x)^{2t} dx,
\end{equation*}
and
\begin{multline*}
\int_{\R^{n+1}_+} |\na f(x,y)|^2 y^s dx dy = \lt(\frac{(t-1)(n_s-2)}2\rt)^2 D(n_s,s) \int_{\R^n} |\na u(x)|^2 dx \\
+ (n_s-2)^2 D(n_s,s+2) \int_{\R^n} u(x)^{t+1} dx.
\end{multline*}
Hence, a straightforward computation shows that
\begin{align*}
&S_{n,s}\int_{\R^{n+1}_+} |\na f(x,y)|^2 y^s dx dy - \lt(\int_{\R^{n+1}_+} f(x,y)^{2_s^*} y^s dx dy\rt)^{\frac 2{2_s^*}} \notag\\
&=S_{n,s} (n_s-2)^2 \lt[\frac{(t-1)^2}4 D(n_s,s) \|\na u\|_2^2
+ D(n_s,s+2) \|u\|_{t+1}^{t+1}\rt]\notag -D(n_s,s)^{\frac{2}{2(t)}} \|u\|_{2t}^{\frac{4t}{2(t)}}\notag\\
&= S_{n,s} (n_s-2)^2 \lt[\frac{n(t-1)}{2(t+1)} D(n_s,s) + D(n_s,s+2)\rt] \|u\|_{t+1}^{t+1} - D(n_s,s)^{\frac{2}{2(t)}} \|u\|_{2t}^{\frac{4t}{2(t)}}\quad\text{\rm (by \eqref{eq:conditiononu})}\notag\\
&=S_{n,s}(n_s-2)^2 D(n_s,s) \frac{n-nt+4t}{2(t+1)} \|u\|_{t+1}^{t+1} -D(n_s,s)^{\frac{2}{2(t)}} \|u\|_{2t}^{\frac{4t}{2(t)}}\hspace{3.1cm} \text{\rm (by \eqref{eq:a*})}\notag\\
&= S_{n,s}(n_s-2)^2 D(n_s,s) \frac{n-nt+4t}{2(t+1)} \lt(\frac{t^2-1}{2n}\rt)^{\frac{2\theta t}{2(t)}} \|\na u\|_2^{\frac{4\theta t}{2(t)}} \|u\|_{t+1}^{\frac{(1-\theta)4t}{2(t)}}\\
&\hspace{3cm} - D(n_s,s)^{\frac{2}{2(t)}} \|u\|_{2t}^{\frac{4t}{2(t)}} \hspace{5.8cm}\text{\rm (by \eqref{eq:conditiononu} and \eqref{eq:canbang})}\\
&=D(n_s,s)^{\frac 2{2(t)}} \hatde[u],
\end{align*}
the last equality follow from the equality
\[
A_{n,t}^{\frac{4t}{2(t)}} = S_{n,s}(n_s-2)^2 D(n_s,s)^{1 -\frac{2}{2(t)}} \frac{n-nt+4t}{2(t+1)} \lt(\frac{t^2-1}{2n}\rt)^{\frac{2\theta t}{2(t)}},
\]
which can be checked by using $v$ as a test function (for which we have equality in \eqref{eq:DD}).
\end{proof} 
Proposition \eqref{bridge} combined with stability version of the weighted Sobolev inequality \eqref{eq:Seuffert} asserts the existence of a positive constant $C$ depending only on $n$ and $t$ such that
\begin{equation}\label{eq:startstep}
C \hatde[u]  \geq \inf_{c\in \R, a>0, x_0\in \R^n} \|\na f - \na g_{c,a,x_0}\|_2^2,
\end{equation}
with $u$ satisfies \eqref{eq:conditiononu}. Note that the normalized condition $\|u\|_{2t} = \|v\|_{2t}$ is equivalent to $\|f\|_{2_s^*} = \|g_{1,a,x_0}\|_{2_s^*}$ for any $a>0$ and $x_0 \in \R^n$. Our main goal of this section is to show that , up to enlarging the constant $C$, we can assume that $c = a =1$ in \eqref{eq:startstep}. This paves the way for the estimation on the infimum on the right hand side of \eqref{eq:startstep} in terms of $u$ and $v$. This point is different with the approach of Carlen and Figalli \cite{Carlen13} and of Seuffert \cite{Seuffert2}. In fact, after using the Bianchi--Egnell type stability version for the weighted Sobolev inequality, these authors continued using the weighted Sobolev inequality to estimate the deficit $\hatde[u]$ from below by the quantity
\[
\inf_{c\in \R, a >0, x_0\in \R^n} \|f - g_{c,a,x_0}\|_{2_s^*} ^{\frac{2}{2_s^*}}.
\]
and then applied their results to derive the stability version of GNS inequality.

\subsection{Controlling the infimum in the stability estimate of Bianchi--Egnell type}
The main result of this section reads as follows.

\begin{lemma}\label{enlarging}
Let $u\in \mathcal D^{2,t+1}(\R^n)$ be nonnegative function satisfying \eqref{eq:conditiononu}. Let $f$ define by \eqref{eq:defineoff}. Then there exists a constant $C_0, \de_0$ depending only on $n$ and $t$ such that for any real number $\delta >0$ with $\delta \leq \delta_0$ and 
\[
\|\nabla f -\nabla g_{c,a,x_0}\|_2^2 \leq \delta,
\]
for some $c\in \R, a >0,$ and $x_0\in \R^n$, then
\[
\|\na f - \na g_{1,1,x_0}\|_2^2 \leq C_0 \delta.
\]
\end{lemma}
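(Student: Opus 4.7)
My plan is to reduce the lemma to establishing the two parameter bounds $|c - 1| \leq C \sqrt{\delta}$ and $|a - 1| \leq C \sqrt{\delta}$. On a bounded neighborhood of $(c, a) = (1, 1)$, direct differentiation of the explicit formula for $g_{c, a, x_0}$ in the parameters $(c, a)$ gives a Lipschitz estimate $\|\nabla g_{c, a, x_0} - \nabla g_{1, 1, x_0}\|_2 \leq C_3(|c - 1| + |a - 1|)$; combining with the triangle inequality applied to $\nabla f - \nabla g_{c, a, x_0}$ and $\nabla g_{c, a, x_0} - \nabla g_{1, 1, x_0}$ closes the argument with $C_0 = (1 + C C_3)^2$.

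Step 1 (bounding $|c - 1|$) is immediate. The normalization $\|u\|_{2t} = \|v\|_{2t}$ from \eqref{eq:conditiononu} translates, via the computation in the proof of Proposition \ref{bridge}, to $\int_{\R^{n+1}_+} f^{2_s^*} y^s dx dy = \int_{\R^{n+1}_+} g_{1, 1, 0}^{2_s^*} y^s dx dy$, a fixed constant depending only on $n$ and $t$. A direct rescaling gives $\int_{\R^{n+1}_+} g_{c, a, x_0}^{2_s^*} y^s dx dy = |c|^{2_s^*} \int_{\R^{n+1}_+} g_{1, 1, 0}^{2_s^*} y^s dx dy$. Combined with the bound $(\int_{\R^{n+1}_+} |f - g_{c, a, x_0}|^{2_s^*} y^s dx dy)^{2/2_s^*} \leq S_{n, s}\, \delta$ obtained by applying \eqref{eq:weightedSobolev} to $f - g_{c, a, x_0}$, the triangle inequality and the positivity $f \geq 0$ force $|c - 1| \leq C_1 \sqrt{\delta}$.

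Step 2 (bounding $|a - 1|$) is the main obstacle, because both of the integrals $\int_{\R^{n+1}_+} g_{c, a, x_0}^{2_s^*} y^s dx dy$ and $\int_{\R^{n+1}_+} |\nabla g_{c, a, x_0}|^2 y^s dx dy$ are independent of $a$ (a reflection of the scale invariance of the Sobolev quotient on $\R^{n+1}_+$), so detecting $a$ requires a test that breaks this invariance. My strategy is to pair $\nabla f - \nabla g_{c, a, x_0}$ with the conformal tangent vector $\phi := \partial_a g_{1, a, x_0}|_{a = 1} = \tfrac{n_s - 2}{2}(1 - |x - x_0|^2 - y^2)(1 + |x - x_0|^2 + y^2)^{-n_s/2}$. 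The $a$-invariance of $\int |\nabla g_{1, a, x_0}|^2 y^s dx dy$ yields the orthogonality $\int \nabla g_{1, 1, x_0} \cdot \nabla \phi\, y^s dx dy = 0$, and a second-order Taylor expansion gives $\int \nabla g_{1, a, x_0} \cdot \nabla \phi\, y^s dx dy = (a - 1) \int |\nabla \phi|^2 y^s dx dy + O((a - 1)^2)$. Since the hypothesis yields $|\int (\nabla f - \nabla g_{c, a, x_0}) \cdot \nabla \phi\, y^s dx dy| \leq \sqrt{\delta} \cdot (\int |\nabla \phi|^2 y^s dx dy)^{1/2}$, the bound $|a - 1| \leq C_2 \sqrt{\delta}$ will follow once we prove $|\int \nabla f \cdot \nabla \phi\, y^s dx dy| = O(\sqrt{\delta})$.

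Differentiating the Euler--Lagrange equation $-\d(y^s \nabla g_{1, a, x_0}) = \lambda y^s g_{1, a, x_0}^{2_s^* - 1}$ in $a$ at $a = 1$ gives $-\d(y^s \nabla \phi) = \lambda (2_s^* - 1) y^s g_{1, 1, x_0}^{2_s^* - 2} \phi$, so integration by parts rewrites $\int \nabla f \cdot \nabla \phi\, y^s dx dy = \lambda (2_s^* - 1) \int f\, g_{1, 1, x_0}^{2_s^* - 2} \phi\, y^s dx dy$. Via the substitution from the proof of Proposition \ref{bridge}, this reduces to an explicit functional of $u$ that vanishes identically when $u = v$ (as a consequence of the orthogonality noted above). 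The key technical computation of the argument estimates its deviation from zero using the normalizations \eqref{eq:conditiononu} together with the smallness $\hatde[u] = O(\sqrt{\delta})$ of the GNS deficit (a consequence of the hypothesis via \eqref{eq:stabweighted}), yielding the desired $O(\sqrt{\delta})$ bound.
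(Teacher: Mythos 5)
Your Step~1 (bounding $|c-1|$) is correct and in fact a bit cleaner than the paper's: applying the weighted Sobolev inequality \eqref{eq:weightedSobolev} to $f-g_{c,a,x_0}$, the normalization $\|f\|_{2_s^*}=\|g_{1,1,0}\|_{2_s^*}$ coming from \eqref{eq:conditiononu}, the scaling $\|g_{c,a,x_0}\|_{2_s^*}=|c|\,\|g_{1,1,0}\|_{2_s^*}$, and the nonnegativity of $f$ do combine to give $|c-1|\le C\sqrt\delta$. You have also put your finger on the real obstruction: scale invariance of both the weighted Dirichlet energy and the $L^{2_s^*}$-norm makes the parameter $a$ invisible to those functionals.

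Step~2, however, is circular as stated. Combining your Taylor identity with Cauchy--Schwarz on the hypothesis gives
\[
\int_{\R^{n+1}_+}\nabla f\cdot\nabla\phi\, y^s\,dx\,dy
= c(a-1)\int_{\R^{n+1}_+}|\nabla\phi|^2 y^s\,dx\,dy
+O\bigl((a-1)^2\bigr)+O(\sqrt\delta),
\]
so the estimate $\bigl|\int\nabla f\cdot\nabla\phi\,y^s\,dx\,dy\bigr|=O(\sqrt\delta)$ that you defer to ``the key technical computation'' is, granted $|c-1|=O(\sqrt\delta)$ and $|a-1|$ a priori small, \emph{equivalent} to the conclusion $|a-1|=O(\sqrt\delta)$ that it is supposed to imply. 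The rewriting as $\lambda(2_s^*-1)\int f\,g_{1,1,x_0}^{2_s^*-2}\phi\,y^s\,dx\,dy$ does not break this: the normalizations \eqref{eq:conditiononu} fix $\|f\|_{2_s^*}$ and certain $L^2$ quantities of $u$ but say nothing about this oblique pairing against $\phi$, and the smallness of $\hatde[u]$ is already what you fed in via \eqref{eq:stabweighted}. Moreover, your linearization presumes $a$ close to $1$; without an a priori two-sided bound on $a$, which you never establish, the remainder $O((a-1)^2)$ can swamp the linear term.

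The paper's Step~3 avoids linearization altogether and is worth contrasting. After reducing to $\|\nabla f-\nabla g_{1,a,x_0}\|_2^2\le 4\delta$, it keeps only the $\partial_y$-component of the integrand on the box $A=\{|x-ax_0|\le1,\ 0<y\le1\}$ and, by a Chebyshev--Fubini argument, produces a point $\bar x$ and two heights $0<y_1<1/4$, $3/4<y_2<1$ at which the pointwise difference between the two profiles is $O(\sqrt\delta)$. After the change of variables $(x,y)\mapsto(x/a,y/a)$, both profiles have the form $(\text{const}(x)+\kappa\,y^2)^{-n_s/2}$ with $y^2$-coefficients $a^{-1+2/n_s}$ and $1$, respectively; subtracting the two samples at $y_1$ and $y_2$ cancels the constant part and yields $|a^{-1+2/n_s}-1|=O(\sqrt\delta)$ directly, with no circularity and no a priori restriction on $a$. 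That pointwise, scale-detecting comparison is the ingredient your proposal is missing.
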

Note that the values of $C_0$ and $\de_0$ can be computed explicitly from the proof below.
\begin{proof}
We follow the argument in the proof of Lemma $2.3$ in \cite{Carlen13}. Suppose that 
\[
\|\na f -\na g_{c,a,x_0}\|_2^2 \leq \delta
\]
for some $c \in \R, a >0$ and $x_0\in \R^n$. A simple compuation shows that
\[
\na f(x,y) = -\frac{n_s-2}2\lt((1-t)u(x)^{-t} \na u(x), 2y\rt) (u(x)^{1-t}+y^2)^{-\frac{n_s}2},
\]
and
\[
\na g_{c,a,x_0}(x,y) = -(n_s-2) c\, a^{\frac{n_s+2}2} \lt(x-x_0,y\rt) \lt(1+a^2|x-x_0|^2 + a^2 y^2\rt)^{-\frac{n_s}2}.
\]
We divide our proof into several steps.\\

$\bullet$ \emph{Step $1$: There exists $\de_1$ depending on $n$ and $t$ such that whenever $\de \leq \de_1$ we have $c > 0$.\text} Indeed, if $c\leq 0$, then
\begin{align*}
\de &\geq (n_s-2)^2\int_{\R^{n+1}_+} \lt|\frac{1}{(u(x)^{1-t}+y^2)^{n_s/2}} -\frac{c\, a^{(n_s+2)/2}}{(1+a^2|x-x_0|^2 +a^2 y^2)^{n_s/2}}\rt|^2 y^{2+s} dx dy\\
&\geq (n_s-2)^2\int_{\R^{n+1}_+} (u(x)^{1-t}+y^2)^{-n_s} y^{2+s} dx dy\\
&= (n_s-2)^2 D(n_s,2+s) \|u\|_{t+1}^{t+1}\\
&= (n_s-2)^2 D(n_s,2+s) \lt(\frac{t^2 -1}{2n}\rt)^{\frac{2 \theta t}{2(t)}} \|\na u\|_2^{\frac{4\theta t}{2(t)}} \|u\|_{t+1}^{\frac{4(1-\theta) t}{2(t)}} \qquad\qquad \text{\rm (by \eqref{eq:conditiononu} and \eqref{eq:canbang}}\\
&\geq (n_s-2)^2 D(n_s,2+s) \lt(\frac{t^2 -1}{2n}\rt)^{\frac{2 \theta t}{2(t)}} \lt(\frac{\|u\|_{2t}}{A_{n,t}}\rt)^{\frac{4t}{2(t)}}\hspace{2.2cm}\text{\rm (by GNS inequality)}\\
&= (n_s-2)^2 D(n_s,2+s) \lt(\frac{t^2 -1}{2n}\rt)^{\frac{2 \theta t}{2(t)}} \lt(\frac{\|v\|_{2t}}{A_{n,t}}\rt)^{\frac{4t}{2(t)}}\hspace{4.5cm} \text{\rm (by \eqref{eq:conditiononu}}.
\end{align*}
Hence, if 
\[
\de \leq \de_1 := (n_s-2)^2 D(n_s,2+s) \lt(\frac{t^2 -1}{2n}\rt)^{\frac{2 \theta t}{2(t)}} \lt(\frac{\|v\|_{2t}}{A_{n,t}}\rt)^{\frac{4t}{2(t)}},
\]
we then obtain a contradiction.\\

$\bullet$ \emph{Step $2$: Assume that $\de\leq \de_1$, then we have $\|\na f -\na g_{1,a,x_0}\|_2^2 \leq 4\de$.} Indeed, by \emph{Step $1$} we have $c > 0$. Note that $\|\na g_{c,a,x_0}\|_2= c\|\na g_{1,a,x_0}\|_2 = c \|\na g_{1,1,0}\|_2$ for any $c >0$. Hence 
\[
|c-1|\|\na g_{1,1,0}\|_2 = \lt|\|\na g_{c,a,x_0}\|_2 -\|\na f\|_2\rt|\leq \|\na g_{c,a,x_0} -\na f\|_2 \leq \de^{\frac12},
\]
and by triangle inequality, we get
\begin{align*}
\|\na f -\na g_{1,a,x_0}\|_2 &\leq \|\na f -\na g_{c,a,x_0}\|_2 + \|\na g_{c,a,x_0} -\na g_{1,a,x_0}\|_2\\
&\leq \de^{\frac12} + |c-1|\|\na g_{1,a,x_0}\|_2\\
&\leq 2 \de^{\frac12},
\end{align*}
which implies our desired estimate.

$\bullet$ \emph{Step $3$: There exist $\de_2$ and $C_2$ depending only on $n$ and $t$ such that whenever $\de \leq \de_2$, we then have
\[
\|\na f - \na g_{1,1,x_0}\|_2^2 \leq C_2 \de.
\]
\text}
By \emph{Step $2$}, we have
\[
\|\na f - \na g_{1,a,x_0}\|_2^2 \leq 4 \de,
\]
if $\de \leq \de_1$. Hence
\begin{align*}
4\de &\geq (n_s-2)^2 \int_{\R^{n+1}_+} \lt|\frac{1}{(u(x)^{1-t}+y^2)^{n_s/2}} -\frac{a^{(n_s+2)/2}}{(1+a^2|x-x_0|^2 +a^2 y^2)^{n_s/2}}\rt|^2 y^{2+s} dx dy\\
&= (n_s-2)^2 \int_{\R^{n+1}_+} \lt|\frac{1}{a^{(n_s+2)/2}(u(x/a)^{1-t}+y^2/a^2)^{n_s/2}} -\frac{1}{(1+|x-a \,x_0|^2 +y^2)^{n_s/2}}\rt|^2 y^{2+s} dx dy.
\end{align*}
Let 
\[
A =\{(x,y)\in \R^{n+1}_+\,:\, |x-a x_0|\leq 1,\, 0< y\leq 1\}.
\]
Note that
\begin{equation}\label{eq:volumeA}
\int_A y^{2+s} dx dy = \frac{1}{s+3}\, \om_n,\qquad \omega_n = \frac{\pi^{n/2}}{\Gamma(1+n/2)}.
\end{equation}
By Fubini's theorem, for any set $B \subset A$ with 
\[
\int_B y^{s+2} dy dx > \frac1{s+3} \lt(1 -\frac1{4^{s+3}}\rt) \, \om_n
\]
there exists $\o{x}$ such that $|\o x-a\, x_0|\leq 1$ such that there exist $0 < y_1 <1/4$ and $3/4 < y_2 <1$ such that $(\o x,y_1), (\o x, y_2) \in B$. Indeed, if this is not the case, then for any a.e $x$ such that $|x -a\, x_0|\leq 1$, the set $B\cap x \times (0,1)$ has empty intersection with at least one of $x \times (0,1/4)$ or $x\times (3/4,1)$,
\begin{align*}
\int_B y^{s+2} dy ds& = \int_{\{|x-ax_0|\leq 1\}} \int_{B\cap x\times (0,1)} y^{s+2} dy dx\\
&\leq \frac1{s+3} \lt(1 - \frac1{4^{s+3}}\rt)\, \om_n,
\end{align*}
with contradicts with our assumption on $B$. Evidently, we have
\[
\frac{4\de}{(n_s-2)^2} \geq \int_{A} \lt|\frac{1}{a^{(n_s+2)/2}(u(x/a)^{1-t}+y^2/a^2)^{n_s/2}} -\frac{1}{(1+|x-a \,x_0|^2 +y^2)^{n_s/2}}\rt|^2 y^{2+s} dx dy.
\]
For any fixed $\gamma$, denote
\[
C =\lt\{(x,y)\in A\, :\, \lt|\frac{1}{a^{(n_s+2)/2}(u(x/a)^{1-t}+y^2/a^2)^{n_s/2}} -\frac{1}{(1+|x-a \,x_0|^2 +y^2)^{n_s/2}}\rt|^2 \geq \gamma \de\rt\}.
\]
Applying Chebyshev's inequality, we have
\[
\int_C y^{s+2}dy dx \leq \frac{4}{\gamma (n_s-2)^2}.
\]
Choosing
\[
\gamma = \frac{2(s+3) 4^{s+4}}{(n_s-2)^2 \om_n},\qquad B = A\setminus C.
\]
Then we have
\[
\int_B y^{s+2}dy dx > \frac1{s+3} \lt(1 -\frac1{4^{s+3}}\rt) \, \om_n,
\]
and for any $(x,y) \in B$,
\begin{equation}\label{eq:cru1}
\lt|\frac{1}{a^{(n_s+2)/2}(u(x/a)^{1-t}+y^2/a^2)^{n_s/2}} -\frac{1}{(1+|x-a \,x_0|^2 +y^2)^{n_s/2}}\rt| \leq \gamma^{\frac12} \de^{\frac12}.
\end{equation}
Notice that for $(x,y) \in A$, we have $1+|x-a \,x_0|^2 +y^2 \leq 3$. Hence, if 
\[
\de \leq \de_1': = \frac1{4\times 3^{n_s} \gamma}, 
\]
we get from \eqref{eq:cru1} for any $(x,y) \in B$ that
\begin{align*}
\gamma^{\frac12} \de^{\frac12} &\geq \frac{1}{(1+|x-a \,x_0|^2 +y^2)^{n_s/2}} - \frac{1}{a^{(n_s+2)/2}(u(x/a)^{1-t}+y^2/a^2)^{n_s/2}}\\
&\geq 3^{-\frac{n_s}2} - \frac{1}{a^{(n_s+2)/2}(u(x/a)^{1-t}+y^2/a^2)^{n_s/2}},
\end{align*}
and
\begin{align*}
\frac{1}{a^{(n_s+2)/2}(u(x/a)^{1-t}+y^2/a^2)^{n_s/2}}& \leq \frac{1}{(1+|x-a \,x_0|^2 +y^2)^{n_s/2}} + \gamma^{\frac12}\de^{\frac12}\\
&\leq 1+ \frac1{2\times 3^{n_s/2}}\\
& < 2.
\end{align*}
Thus, we obtain
\[
\frac12 \leq a^{\frac{n_s+2}2}\lt(u\lt(\frac x a\rt)^{1-t}+\frac{y^2}{a^2}\rt)^{\frac{n_s}2} \leq 2\times 3^{\frac{n_s}2},
\]
and hence
\begin{equation}\label{eq:cru2}
\lt|a^{\frac{n_s+2}2}(u(x/a)^{1-t}+y^2/a^2)^{\frac{n_s}2} -(1+|x-a \,x_0|^2 +y^2)^{\frac{n_s}2} \rt| \leq 2\times 3^{n_s} \gamma^{\frac12} \delta^{\frac12},
\end{equation}
for any $(x,y) \in B$. It is an elementary estimation that
\begin{align}\label{eq:cru3}
\Bigg|a^{\frac{n_s+2}2}\Bigl(u\Big(\frac xa\Big)^{1-t}&+\frac{y^2}{a^2}\Bigl)^{\frac{n_s}2} -(1+|x-a \,x_0|^2 +y^2)^{\frac{n_s}2} \Bigg| \notag\\
&\geq \frac{n_s}2 \Bigg|a^{1+\frac2{n_s}}(u(x/a)^{1-t}+y^2/a^2) -(1+|x-a \,x_0|^2 +y^2)\Bigg| \notag\\
&\qquad \times \min\{a^{1+\frac2{n_s}}(u(x/a)^{1-t}+y^2/a^2),1+|x-a \,x_0|^2 +y^2\}^{\frac{n_s}2 -1}\notag\\
&\geq \frac{n_s}2 2^{\frac2{n_s}-1} \lt|a^{1+\frac2{n_s}}\lt(u\lt(\frac xa\rt)^{1-t}+\frac{y^2}{a^2}\rt) -(1+|x-a \,x_0|^2 +y^2)\rt|,
\end{align}
for $(x,y)\in B$. Combining \eqref{eq:cru2} and \eqref{eq:cru3}, we get
\begin{equation}\label{eq:cru4}
\lt|a^{1+\frac2{n_s}}\lt(u\lt(\frac xa\rt)^{1-t}+\frac{y^2}{a^2}\rt) -(1+|x-a \,x_0|^2 +y^2)\rt| \leq \frac{2^3 3^{n_s}}{n_s}\gamma^{\frac12} \de^{1/2},
\end{equation}
for $(x,y)\in B$. 

Our observation above shows that we can choose $\o x$ with $|\o x -a\, x_0|\leq 1$ and  $0< y_1 < 1/4$ and $ 3/4 < y_2 < 1$ such that $(\o x,y_1), (\o x ,y_2) \in B$. Then by \eqref{eq:cru4}, we have
\begin{align*}
\frac12 \lt|\frac{1}{a^{1-2/n_s}} -1\rt| &\leq |y_1^2 -y_2^2|\, \lt| \frac{1}{a^{1-2/n_s}} -1\rt|\\
&=\Bigg|\Bigg[a^{1+\frac2{n_s}}\Bigl(u\Big(\frac {\o x}a\Big)^{1-t}+\frac{y_1^2}{a^2}\Bigl) -(1+|\o x-a \,x_0|^2 +y_1^2)\Bigg] \\
&\qquad - \Bigg[a^{1+\frac2{n_s}}\Bigl(u\Big(\frac {\o x}a\Big)^{1-t}+\frac{y_2^2}{a^2}\Bigl) -(1+|\o x-a \,x_0|^2 +y_2^2)\Bigg]\Bigg|\\
&\leq \lt|a^{1+\frac2{n_s}}\Bigl(u\Big(\frac {\o x}a\Big)^{1-t}+\frac{y_1^2}{a^2}\Bigl) -(1+|\o x-a \,x_0|^2 +y_1^2)\rt| \\
&\qquad + \lt|a^{1+\frac2{n_s}}\Bigl(u\Big(\frac {\o x}a\Big)^{1-t}+\frac{y_2^2}{a^2}\Bigl) -(1+|\o x-a \,x_0|^2 +y_2^2)\rt|\\
&\leq \frac{2^4 3^{n_s}}{n_s}\gamma^{\frac12} \de^{1/2}.
\end{align*}
Thus, we get
\begin{equation}\label{eq:cru5}
\lt|\frac{1}{a^{1-2/n_s}} -1\rt| \leq \frac{2^5 3^{n_s}}{n_s}\gamma^{\frac12} \de^{1/2} =:\gamma_1^{\frac12} \de^{\frac12}.
\end{equation}
Hence, if
\[
\de \leq \delta_1'' : = \frac{n_s^2}{2^{12} 3^{2n_s} \gamma }
\]
we then have $1/2 \leq a^{-1+ 2/n_s} \leq 2$, or equivalently
\begin{equation}\label{eq:bounda}
2^{-\frac{n_s}{n_s-2}} \leq a \leq 2^{\frac{n_s}{n_s -2}}.
\end{equation}
We continue our proof by bounding $\|\na g_{1,1,x_0} -\na g_{1,a,x_0}\|_2^2$. By translating in variable $x$, it is enough to bound $\|\na g_{1,1,0} -\na g_{1,a,0}\|_2^2$. A straightforward computation shows that
\begin{multline*}
\|\na g_{1,1,0} -\na g_{1,a,0}\|_2^2\\
=(n_s-2)^2 \int_{\R^{n+1}_+} (|x|^2 +y^2)\Bigg|\frac1{(1+|x|^2 +y^2)^{n_s/2}} -\frac{a^{(n_s+2)/2}}{(1+a^2|x|^2 +a^2 y^2)^{n_s/2}}\Bigg|^2 y^s dx dy.
\end{multline*}
Notice that
\begin{align*}
&\Bigg|\frac1{(1+|x|^2 +y^2)^{n_s/2}}-\frac{a^{(n_s+2)/2}}{(1+a^2|x|^2 +a^2 y^2)^{n_s/2}}\Bigg| \\
&\leq \frac{n_s}2\Bigg|\frac1{1+|x|^2 +y^2}-\frac{a^{1+2/n_s}}{1+a^2|x|^2 +a^2 y^2}\Bigg| \max\lt\{\frac1{1+|x|^2 +y^2}, \frac{a^{1+2/n_s}}{1+a^2|x|^2 +a^2 y^2}\rt\}^{\frac{n_s}2-1}\\
&\leq n_s 2^{\frac{n_s}2-2} \Bigg|\frac1{1+|x|^2 +y^2}-\frac{a^{1+2/n_s}}{1+a^2|x|^2 +a^2 y^2}\Bigg| \lt(\frac1{1+|x|^2 +y^2}\rt)^{\frac{n_s}2 -1}\hspace{1cm}\text{\rm (by \eqref{eq:bounda})}\\
&= n_s 2^{\frac{n_s}2-2} \frac{a^2|1-a^{-1+2/n_s}|(|x|^2+y^2)}{(1+|x|^2 +y^2)(1+a^2|x|^2 +a^2 y^2)}\lt(\frac1{1+|x|^2 +y^2}\rt)^{\frac{n_s}2 -1}\\
&\leq n_s 2^{\frac{n_s}2-2} 2^{\frac{4n_s}{n_s-2}} |1-a^{-1+2/n_s}|(|x|^2+y^2)\lt(\frac1{1+|x|^2 +y^2}\rt)^{\frac{n_s}2 +1} \hspace{1.9cm}\text{\rm (by \eqref{eq:bounda})}\\
&\leq n_s 2^{\frac{n_s}2-2} 2^{\frac{4n_s}{n_s-2}} |1-a^{-1+2/n_s}|\lt(\frac1{1+|x|^2 +y^2}\rt)^{\frac{n_s}2}
\end{align*}
Hence
\begin{align*}
\|\na g_{1,1,0} -\na g_{1,a,0}\|_2^2&\leq (n_s-2)^2n_s^2 2^{n_s-4} 2^{\frac{bn_s}{n_s-2}} |1-a^{-1+2/n_s}|^2\int_{\R^{n+1}_+}\lt(\frac1{1+|x|^2 +y^2}\rt)^{n_s-1} y^s dy dx\\
&\leq (n_s-2)^2n_s^2 2^{n_s-4} 2^{\frac{bn_s}{n_s-2}} \frac{\pi^{n/2}\Gamma((1+s)/2) \Gamma((n_s-2)/2)}{2\Gamma(n_s-1)}\gamma_1 \de\\
&=: \gamma_2 \de,
\end{align*}
here, the second inequality comes from \eqref{eq:cru5} and computing explicitly the integral. Finally, by triangle inequality and Cauchy--Schwartz inequality, we have
\[
\|\na f -\na g_{1,1,x_0}\|_2^2 \leq 2(\|\na f-\na g_{1,a,x_0}\|_2^2 + \|\na g_{1,a,x_0} -\na g_{1,1,x_0}\|_2^2) \leq 2(4+ \gamma_2)\de.
\]
This finishes the proof of \emph{Step $3$} by choosing $\de_2 =\min\{\de_1, \de_1',\de_1''\}$ and $C_2 =2(4+\gamma_2)$. 
\end{proof}

\subsection{Proof of Theorem \ref{1stTheorem}}
Let $C_0$ and $\de_0$ be as in Lemma \ref{enlarging}. Define
\[
\de = \min\lt\{1, \frac{\de_0}{C}\rt\},
\]
with $C$ appears in \eqref{eq:startstep}. Suppose that $u$ satisfies the condition of Theorem \ref{1stTheorem}, and $\hatde[u] \leq \de$, hence $C\hatde[u] \leq \de_0$. By \eqref{eq:startstep}, there exist $c,a,x_0$ such that
\[
\|\na f -\na g_{c,a,x_0}\|_2^2 \leq C\hatde[u] \leq \de_0.
\]
Applying Lemma \ref{enlarging}, we get
\[
\|\na f - \na g_{1,1,x_0}\|_2^2 \leq C_0 C \hatde[u]=: K_1 \hatde[u],
\]
Replacing $u$ by $u(\cdot + x_0)$ which does not change $\deGNS[u]$, we can asumme that $x_0 =0$ (we make this assumption for simplifying the notation in the proof). Our aim is to prove the following inequality
\begin{equation}\label{eq:aim}
\int_{\R^n} |\na u(x) -\na v(x)|^2 dx + \int_{\R^n} \lt|u(x)^{\frac{1+t}2} -v(x)^{\frac{1+t}2}\rt|^2 dx \leq K \hatde[u],
\end{equation}
for some constant $K$ depending only on $n$ and $t$. Notice that
\[
g_{1,1,0} (x,y) = (v(x)^{1-t} + y^2)^{-\frac{n_s-2}2},
\]
and then by an easy computation, we have
\[
\na g_{1,1,0}(x,y) = -\frac{n_s-2}2 \lt((1-t)v(x)^{-t} \na v(x), 2y\rt)(v(x)^{1-t} + y^2)^{-\frac{n_s}2}.
\]
Substituting the expressions of $\na f$ and $\na g_{1,1,0}$ into $\|\na f-\na g_{1,1,0}\|_2$, we have
\begin{align*}
\|\na f - \na g_{1,1,0}\|_2^2 &= (1-t)^2\frac{(n_s-2)^2}4 \int_{\R^{n+1}_+}\lt|\frac{u(x)^{-t}\na u(x)}{(u(x)^{1-t} + y^2)^{\frac{n_s}2}} -\frac{v(x)^{-t}\na v(x)}{(v(x)^{1-t} + y^2)^{\frac{n_s}2}}\rt|^2 y^s dx dy\\
&\qquad + (n_s-2)^2 \int_{\R^{n+1}_+}\lt|\frac1{(u(x)^{1-t} + y^2)^{\frac{n_s}2}} -\frac1{(v(x)^{1-t} + y^2)^{\frac{n_s}2}}\rt|^2 y^{2+s} dx dy\\
&=(1-t)^2\frac{(n_s-2)^2}4 \, I + (n_s-2)^2 \, II.
\end{align*}
Our goal is to estimate $I$ and $II$.

\emph{$\bullet$ Estimate $II$:} By a simple change of variable, we have
\[
\int_{0}^\infty \frac1{(a^{1-t}+ y^2)^{n_s}} y^s dx dy = a^{1+t} \int_{0}^\infty \frac1{(1+ y^2)^{n_s}} y^s dx dy =: \alpha_{n,s} a^{1+t},\qquad \forall\, a>0.
\]
Hence
\begin{align*}
II &= \alpha_{n,s} \lt(\int_{\R^n} u(x)^{1+t} dx + \int_{\R^n} v(x)^{1+t}dx\rt) \\
&\qquad\qquad -2\int_{\R^{n+1}_+} \lt(\frac1{(u(x)^{1-t} + y^2)(v(x)^{1-t} +y^2)}\rt)^{\frac{n_s}2} y^s dy dx\\
&=\alpha_{n,s} \lt(\int_{\R^n} u(x)^{1+t} dx + \int_{\R^n} v(x)^{1+t}dx\rt)\\
&\qquad\qquad -2\int_{\R^{n+1}_+} \lt(\frac1{(u(x)^{1-t}v(x)^{1-t} + y^2(u(x)^{1-t}+v(x)^{1-t}) +y^4)}\rt)^{\frac{n_s}2} y^s dy dx\\
&\geq \alpha_{n,s} \lt(\int_{\R^n} u(x)^{1+t} dx + \int_{\R^n} v(x)^{1+t}dx\rt)\\
&\qquad\qquad -2\int_{\R^{n+1}_+}\frac{1}{(u(x)^{(1-t)/2} v(x)^{(1-t)/2} + y^2)^{n_s}} y^s dy dx\\
&= \alpha_{n,s} \int_{\R^n} \lt|u(x)^{\frac{1+t}2} -v(x)^{\frac{1+t}2}\rt|^2 dx.
\end{align*}
Thus, we have shown that
\begin{equation}\label{eq:norm1+t}
\int_{\R^n} \lt|u(x)^{\frac{1+t}2} -v(x)^{\frac{1+t}2}\rt|^2 dx \leq \frac{C_1}{\alpha_{n,s} (n_s-2)^2} \, \hatde[u] = : K_2 \hatde[u].
\end{equation}

\emph{$\bullet$ Estimate $I$:} By triangle inequality, we have
\begin{align*}
I^{\frac12} &\geq \lt(\int_{\R^{n+1}_+}\lt|\frac{u(x)^{-t}(\na u(x) -\na v(x))}{(u(x)^{1-t} + y^2)^{\frac{n_s}2}} \rt|^2 y^s dx dy\rt)^{\frac12} \\
&\qquad -\lt(\int_{\R^{n+1}_+}|\na v(x)|^2\lt|\frac{u(x)^{-t}}{(u(x)^{1-t} + y^2)^{\frac{n_s}2}} -\frac{v(x)^{-t}}{(v(x)^{1-t} + y^2)^{\frac{n_s}2}}\rt|^2 y^s dx dy\rt)^{\frac12}\\
&= \lt(\beta_{n,s} \int_{\R^n} |\na u(x) -\na v(x)|^2 dx\rt)^{\frac12} -III^{\frac12},
\end{align*}
with 
\[
\beta_{n,s} = \int_{0}^\infty \frac1{(1+ y^2)^{n_s}} y^s dy.
\]
By a suitable change of variable, we have
\begin{align*}
III&= 2\int_{\R^n} |\na v(x)|^2 u(x)^{-t} v(x)^{-t}\\
&\qquad \qquad\times \int_0^{\infty} \Bigg(\frac1{((u(x) v(x))^{\frac{1-t}2} + y^2)^{n_s}}-\frac1{(v(x)^{1-t} + y^2)^{\frac{n_s}2}(u(x)^{1-t} + y^2)^{\frac{n_s}2}}\Bigg) y^s dx dy\\
&=2\int_{\R^n} |\na v(x)|^2 u(x)^{\frac{(s+1)(t-1)}4} v(x)^{\frac{(s+1)(t-1)}4}\\
&\qquad\times \int_0^{\infty} \Bigg(\frac1{(1+ (u(x) v(x))^{\frac{t-1}2}y^2)^{n_s}}-\frac1{(1+ v(x)^{t-1} y^2)^{\frac{n_s}2}(1+ u(x)^{t-1}y^2)^{\frac{n_s}2}}\Bigg) y^s dx dy.
\end{align*}
Notice that $(1+ (u(x) v(x))^{\frac{t-1}2}y^2)^2 \leq (1+ u(x)^{t-1}y^2)(1+ v(x)^{t-1} y^2)$, by the convexity we have
\begin{align*}
&\frac1{(1+ (u(x) v(x))^{\frac{t-1}2}y^2)^{n_s}} -\frac1{(1+ v(x)^{t-1} y^2)^{\frac{n_s}2}(1+ u(x)^{t-1}y^2)^{\frac{n_s}2}}\\
&\leq \frac{n_s}2 \lt[\frac1{(1+ (u(x) v(x))^{\frac{t-1}2}y^2)^2} -\frac1{(1+ u(x)^{t-1}y^2)(1+ v(x)^{t-1} y^2)}\rt]\frac1{(1+ (u(x) v(x))^{\frac{t-1}2}y^2)^{n_s-2}}\\
&= \frac{n_s}2 \frac{|u(x)^{\frac{t-1}2} -v(x)^{\frac{t-1}2}|^2 y^2}{(1+ (u(x) v(x))^{\frac{t-1}2}y^2)^{n_s}[1+ (u(x)^{t-1} + v(x)^{t-1})y^2 + u(x)^{t-1}v(x)^{t-1} y^4]}\\
&\leq \frac{n_s}2 \frac{|u(x)^{\frac{t-1}2} -v(x)^{\frac{t-1}2}|^2}{u(x)^{t-1} + v(x)^{t-1}} \frac1{(1+ (u(x) v(x))^{\frac{t-1}2}y^2)^{n_s}}.
\end{align*}
Thus we get
\begin{align*}
III&\leq n_s\int_{\R^n} |\na v(x)|^2 \frac{|u(x)^{\frac{t-1}2} -v(x)^{\frac{t-1}2}|^2}{u(x)^{t-1} + v(x)^{t-1}} \int_0^\infty \frac{u(x)^{\frac{(s+1)(t-1)}4} v(x)^{\frac{(s+1)(t-1)}4}}{(1+ (u(x) v(x))^{\frac{t-1}2}y^2)^{n_s}} y^s dy dx\\
&= n_s \beta_{n,s} \int_{\R^n}|\na v(x)|^2 \frac{|u(x)^{\frac{t-1}2} -v(x)^{\frac{t-1}2}|^2}{u(x)^{t-1} + v(x)^{t-1}} dx.
\end{align*}
Since $v(x) = (1+ |x|^2)^{1/(1-t)}$, then $\na v(x) = [2x (1+ |x|^2)^{t/(1-t)}]/(1-t)$, and
\[
|\na v(x)| = \frac2{t-1} |x| (1+ |x|^2)^{-\frac{t}{t-1}} \leq \frac{2}{t-1} v(x)^{\frac{t+1}{2}}.
\]
Hence, there exists $\beta'_{n,s}$ depending only on $n$ and $t$ such that
\[
III \leq \be'_{n,s} \int_{\R^n} |u(x)^{\frac{t+1}2} -v(x)^{\frac{t+1}2}|^2 dx \leq \beta'_{n,s} K_2 \hatde[u] =: K_3\hatde[u],
\]
here we use \eqref{eq:norm1+t}. Applying Cauchy-Schwartz inequality, we have
\begin{align}\label{eq:L2gradient}
\int_{\R^n} |\na u(x) -\na v(x)|^2 dx &\leq \frac{2}{\beta_{n,s}}(I + III)\notag\\
& \leq \frac{2}{\beta_{n,s}}\lt(\frac{4}{(t-1)^2 (n_s-2)^2} K_1+ K_3\rt) \hatde[u][u] \notag\\
&=: K_4 \hatde[u].
\end{align}
Let $K = K_4 + K_2$ which depends only on $n$ and $t$, we have
\[
\int_{\R^n} |\na u(x) -\na v(x)|^2 dx + \int_{\R^n} \lt|u(x)^{\frac{1+t}2} -v(x)^{\frac{1+t}2}\rt|^2 dx \leq K \hatde[u]
\]
as our desire \eqref{eq:aim}. This finishes our proof of Theorem \ref{1stTheorem}.

\begin{remark}\label{remark}
Since for any $x_0 \in R^n$
\[
\|\na u-\na v_{1,x_0}\|_2^2 \leq 2(\|\na u\|_2^2 + \|\na v\|_2^2)\quad\text{and}\quad \|u^{\frac{1+t}2} -v_{1,x_0}^{\frac{1+t}2}\|_2^2 \leq 2(\|u\|_{t+1}^{t+1} + \|v\|_{t+1}^{t+1}),
\]
hence
\begin{align*}
\inf_{x_0\in \R^n}\{\|\na u-\na v_{1,x_0}\|_2^2 + \|u^{\frac{1+t}2} -v_{1,x_0}^{\frac{1+t}2}\|_2^2\} \leq A + B\, \|u\|_{t+1}^{t+1}\leq C(1 + \hatde[u]),
\end{align*}
with $A,B$ depend only on $n$ and $t$, here we use \eqref{eq:conditiononu}. So, up to enlarging the constant $K$, Theorem \ref{1stTheorem} always holds without restriction on $\hatde[u]$.
\end{remark}

We conclude this section by showing how our Theorem \ref{1stTheorem} implies the stability result of Carlen and Figalli, and of Seuffert \eqref{eq:Seuffert}. Indeed, suppose that $\|u\|_{2t} = \|v\|_{2t}$. For $a >0$, define $u_a(x) = a^{n/2t} u(ax)$, then $\hatde[u_a] = \hatde[u]$. By a suitable choice of $a >0$, we have $[(t^2 -1)\|\na u_a\|_2^2]/(2n) = \|u_a\|_{t+1}^{t+1}$. Applying Theorem \ref{1stTheorem}, then whenever 
\[
\hatde[u_a]=\hatde[u] \leq \de,
\]
we can choose $x_0\in \R^n$ such that
\begin{equation}\label{eq:deriveCFS}
\int_{\R^n} |\na u_a(x) -\na v_{1,x_0}(x)|^2 dx + \int_{\R^n} \lt|u_a(x)^{\frac{1+t}2} -v_{1,x_0}(x)^{\frac{1+t}2}\rt|^2 dx \leq K \hatde[u].
\end{equation}
Since $\|u_a\|_{2t} = \|v\|_{2t}$, then 
\[
A_{n,t}^{\frac{4t}{2(t)}} \|\na u_a\|_2^{\frac{4 \theta t}{2(t)}} \|u_a\|_{t+1}^{\frac{4(1-\theta)t}{2(t)}} = \|u\|_{2t}^{\frac{4t}{2(t)}} + \deGNS[u_a] \leq \|v\|_{2t}^{\frac{4t}{2(t)}} + \de.
\]
This and the fact $[(t^2 -1)\|\na u_a\|_2^2]/(2n) = \|u_a\|_{t+1}^{t+1}$ imply
\[
\|\na u_a\|_2 \leq C(n,t),\,  \quad\text{and}\quad\, \|u_a\|_{t+1} \leq C(n,t),
\]
for some constant $C(n,t)$ depending only on $n$ and $t$.\\

\emph{$\bullet$ If $n=2$:} By H\"older inequality and \eqref{eq:deriveCFS}, we have
\begin{align*}
\int_{\R^n} |u_a^{2t} -v_{1,x_0}^{2t}| dx &\leq c_1\int_{\R^n} \lt|u_a^{\frac{1+t}2} -v_{1,x_0}^{\frac{1+t}2}\rt|\, (u_a + v_{1,x_0})^{\frac{3t-1}2} dx\\
&\leq c_1\lt(\int_{\R^n} \lt|u_a^{\frac{1+t}2} -v_{1,x_0}^{\frac{1+t}2}\rt|^2 dx\rt)^{\frac12} \|u_a + v_{1,x_0}\|_{3t-1}^{\frac{3t-1}2}\\
&\leq c_2 \sqrt{K \hatde[u]}\lt(\|u_a\|_{3t-1}^{\frac{3t-1}2} + \|v_{1,x_0}\|_{3t-1}^{\frac{3t-1}2}\rt),
\end{align*}
with $c_1, c_2$ depend only on $t$. Since $3t -1 > t+1$, by GNS inequality, we have
\[
\|u_a\|_{3t-1} \leq C_{\rm GNS}(n,t) \|\na u_a\|_2^{\theta'} \|u_a\|_{t+1}^{1-\theta'}  \leq C(n,t) C_{\rm GNS}(n,t),
\]
with $\theta' = 2(t-1)/(3t-1)$. Thus there exists $K'$ depending only on $n,t$ such that
\[
\int_{\R^n} |u_a^{2t} -v_{1,x_0}^{2t}| dx \leq K' \hatde[u]^{\frac12},
\]
as our desire.\\

\emph{$\bullet$ If $n\geq 3$:} If $3t -1 \leq 2n/(n-2)$, or equivalently $t \leq (3n-2)/(3(n-2))$, then repeating the argument in the case $n =2$, we obtain \eqref{eq:Seuffert}. If $t > (3n-2)/(3(n-2))$, then we have by an easy computation that
\[
t+ 1 < (2t-1) \frac{2n}{n+2}=:q(t) \leq \frac{2n}{n-2}.
\]
From \eqref{eq:deriveCFS}, we have
\[
\lt(\int_{\R^n} |u_a -v_{1,x_0}|^{\frac{2n}{n-2}} dx\rt)^{\frac{n-2}n} \leq S_n\int_{\R^n} |\na u_a(x) -\na v_{1,x_0}(x)|^2 dx \leq S_n K \hatde[u],
\]
where $S_n$ is the best constant in Sobolev inequality. This and H\"older inequality implies
\begin{align*}
\int_{\R^n} |u_a^{2t} -v_{1,x_0}^{2t}| dx &\leq c_3\int_{\R^n} \lt|u_a -v_{1,x_0}\rt|\, (u_a + v_{1,x_0})^{2t-1} dx\\
&\leq c_3 \lt(\int_{\R^n} |u_a -v_{1,x_0}|^{\frac{2n}{n-2}} dx\rt)^{\frac{n-2}{2n}} \|u_a + v_{1,x_0}\|_{q(t)}^{2t-1}\\
&\leq c_4 \sqrt{S_n K \hatde[u]} \lt(\|u_a\|_{q(t)}^{2t-1} + \|v_{1,x_0}\|_{q(t)}^{2t-1}\rt),
\end{align*}
with $c_3, c_4$ depend only on $t$. Repeating the argument in the case $n=2$ by using GNS inequality, we obtain \eqref{eq:Seuffert}.

\subsection{Proof of Corollary \ref{quantitativeform}}
Observe that
\[
\hatde[u] = \|u\|_{2t}^{\frac{4t}{2(t)}} \lt( (1 + \deGNS[u])^{\frac{4t}{2(t)}} -1\rt).
\]
Since $4t > 2(t)$, then there exists a constant $B_{n,t}$ depending only on $n,t$ such that
\[
B_{n,t}^{-1} \deGNS[u] \leq \hatde[u] \leq B_{n,t}\deGNS[u],
\]
if $\deGNS[u] \leq 1$.

\begin{proof}[Proof of Corollary \ref{quantitativeform}]
We first suppose that $u$ is a nonnegative function with 
\[
\|u\|_{2t} = \|v\|_{2t},\quad \text{and}\quad \deGNS[u] \leq \de':=\min\lt\{1, \frac{\de}{B_{n,t}}\rt\}
\]
where $\de$ comes from Theorem \ref{1stTheorem}. Choose $a > 0$ such that 
\[
\frac{t^2-1}{2n} \|\na u_a\|_2^2 = \|u_a\|_{t+1}^{t+1},\quad\text{with}\quad u_a(x) =a^{\frac n{2t}} u(a x).
\]
Then $\deGNS[u_a] = \deGNS[u]$. The observation above implies $\hatde[u] \leq \de$. By Theorem \ref{1stTheorem}, there exists $x_0$ such that  
\begin{equation*}\label{eq:coroproof}
\int_{\R^n} |\na u_a(x) -\na v_{1,x_0}(x)|^2 dx + \int_{\R^n} \lt|u_a(x)^{\frac{1+t}2} -v_{1,x_0}(x)^{\frac{1+t}2}\rt|^2 dx \leq K \hatde[u].
\end{equation*}
In particular, we have $\|\na u_a -\na v_{1,x_0}\|_2 \leq \sqrt{K \hatde[u]}$ and 
\[
\|u_a -v_{1,x_0}\|_{t+1}^{t+1} \leq \|u_a(x)^{\frac{1+t}2} -v_{1,x_0}(x)^{\frac{1+t}2}\|_2^2 \leq K \hatde[u].
\]
By GNS inequality, we have
\[
\|u_a -v_{1,x_0}\|_{2t} \leq A_{n,t} \|\na u_a -\na v_{1,x_0}\|_2^\theta \|u_a -v_{1,x_0}\|_{t+1}^{1-\theta} \leq A_{n,t} (K \hatde[u])^{\frac\theta2+ \frac{1-\theta}{t+1}}.
\]
This implies our desired estimate \eqref{eq:quantiativeform} for nonnegative functions $u$ with $\|u\|_{2t} =\|v\|_{2t}$ and $\deGNS[u] \leq \de'$. Since for any function $u\in \mathcal D^{2,t+1}(\R^n)$ with $\|u\|_{2t} = \|v\|_{2t}$, we have
\[
\lambda_{\rm GNS}[u] \leq 2^{2t} \|u\|_{2t} =2^{2t} \|v\|_{2t}.
\]
Thus by enlarging the constant $C$, the inequality \eqref{eq:quantiativeform} holds for any nonnegative function $u$ such that $\|u\|_{2t} = \|v\|_{2t}$ without restriction on $\deGNS[u]$. 

The condition $\|u\|_{2t} = \|v\|_{2t}$ is removed by the homogeneity of \eqref{eq:quantiativeform}.

We next relax the assumption that $u$ is nonnegative. We follow the argument in \cite{Cianchi}. Denote
\[
p = \frac{2(t+1)}{\theta(t+1) + 2(1-\theta)},\qquad \alpha = \frac{(t+1)\theta}{\theta(t+1) + 2(1-\theta)} \in (0,1).
\]
We will show that there exists a constant $C$ depending only on $n, t$ such that
\begin{equation}\label{eq:onepartsmall}
\min\lt\{\frac1{\|u\|_{2t}^{2t}}\int_{\{0< u\}} |u|^{2t} dx ,\frac1{\|u\|_{2t}^{2t}}\int_{\{ u>0\}} |u|^{2t} dx\rt\} \leq C\deGNS[u]^{\frac{2t}p}.
\end{equation}
Applying GNS inequality for $u_+$ and $u_-$, we have
\[
\|u_*\|_{2t} \leq A_{n,t} (\|\na u_*\|_2^2)^{\frac{\theta}2} \, (\|u_*\|_{t+1}^{t+1})^{\frac{1-\theta}{t+1}},
\]
with $* = \pm$. This implies
\[
\|u_*\|_{2t}^p \leq A_{n,t}^p (\|\na u_*\|_2^2)^{\alpha} (\|u_*\|_{t+1}^{t+1})^{1-\alpha}.
\]
Taking the sum of $\|u_*\|_{2t}^p$, we get
\[
\|u_+\|_{2t}^p + \|u_-\|_{2t}^p \leq A_{n,t}^p\lt[(\|\na u_+\|_2^2)^{\alpha} (\|u_+\|_{t+1}^{t+1})^{1-\alpha} +(\|\na u_-\|_2^2)^{\alpha} (\|u_-\|_{t+1}^{t+1})^{1-\alpha}\rt].
\]
Notice that
\[
\|\na u_+\|_2^2 + \|\na u_-\|_2^2 =\|\na u\|_2^2, \quad \text{and}\quad \|u_+\|_{t+1}^{t+1} + \|u_-\|_{t+1}^{t+1} = \|u\|_{t+1}^{t+1},
\]
and by the convexity, we have
\[
a^\alpha b^{1-\alpha} + (A-a)^{\alpha} (B-b)^{1-\alpha} \leq A^{\alpha} B^{1-\alpha},
\]
for any $0\leq a\leq A$ and $0\leq b\leq B$. These observations yield
\begin{align*}
\|u_+\|_{2t}^p + \|u_-\|_{2t}^p &\leq A_{n,t}^p \|\na u\|_2^{2\alpha} \|u\|_{t+1}^{(t+1)(1-\alpha)}= (A_{n,t} \|\na u\|_2^{\theta} \|u\|_{t+1}^{1-\theta})^p= \|u\|_{2t}^p(1+ \deGNS[u])^p.
\end{align*}
Dividing both sides by $\|u\|_{2t}^p$, we get
\begin{equation}\label{eq:tachtichphan}
\lt[\lt(\frac1{\|u\|_{2t}^{2t}} \int_{\R^n} u_+^{2t} dx\rt)^{\frac p{2t}} + \lt(\frac1{\|u\|_{2t}^{2t}} \int_{\R^n} u_-^{2t} dx\rt)^{\frac p{2t}}\rt]^{\frac1p} -1 \leq \deGNS[u].
\end{equation}
Since $p < 2t$, the function $\varphi(a)= (a^{p/2t} +(1-a)^{p/2t})^{1/p} -1$ is concave on $[0,1]$ and $\varphi(a) > 0$ for any $a \in (0,1)$. We claim that
\begin{equation}\label{eq:claim}
\varphi(a) \geq \kappa \min\{a^{\frac p{2t}}, (1-a)^{\frac p{2t}}\},
\end{equation}
for some positive constant $\kappa$ depending only on $n$ and $t$. Indeed, since $\vphi(a) =\vphi(1-a)$, it is enough to check \eqref{eq:claim} for $0< a< 1/2$. Moreover, $\vphi$ is continuous and strict positive on $(0, 1/2)$, hence it suffices to prove \eqref{eq:claim} for $a$ near $0$. Differentiating $\vphi$, we get
\[
\vphi'(a) = \frac{1}{2t} \lt(a^{\frac p{2t}} + (1-a)^{\frac p{2t}}\rt)^{\frac1p -1} \lt(a^{\frac p{2t} -1} -(1-a)^{\frac p{2t} -1}\rt).
\]
Thus for $0 < a < 1/3$, there exists $c >0$ depending on $p$ and$ s$ (thus, on $n$ and $t$) such that $\vphi'(a) \geq c a^{p/(2t) -1}$ hence 
\[
\vphi(a) \geq  \frac{2t c}p a^{\frac p{2t}},\qquad 0< a< 1/3.
\]
This proves our claim \eqref{eq:claim}. Our claim \eqref{eq:claim} and \eqref{eq:tachtichphan} prove \eqref{eq:onepartsmall}.

Without loss of generality, we suppose that the minimum in \eqref{eq:onepartsmall} is attained by $u_-$. Notice that $u = |u| -2 u_-$ and $\deGNS[|u|] \leq \deGNS[u]$. The estimate for nonnegative function asserts that
\begin{equation}\label{eq:1f}
\int_{\R^n} \lt|\frac{\|v\|_{2t}}{\|u\|_{2t}}\, |u| - a^{\frac{n}{2t}} v_{a,x_0}(x)\rt|^{2t} dx \leq  \lt(C\deGNS[|u|]\rt)^{\frac{2t}p} \leq \lt(C\deGNS[u]\rt)^{\frac{2t}p}.
\end{equation}
for some $a >0$ and $x_0 \in \R^n$. Since $u = |u|-2u_-$, then
\begin{align}\label{eq:2ndf}
\int_{\R^n} \Bigg|\frac{\|v\|_{2t}}{\|u\|_{2t}}\, u &- a^{\frac{n}{2t}} v_{a,x_0}(x)\Bigg|^{2t} dx\notag\\
&= \int_{\R^n} \lt|\frac{\|v\|_{2t}}{\|u\|_{2t}}\, (|u|-2u_-) - a^{\frac{n}{2t}} v_{a,x_0}(x)\rt|^{2t} dx\notag\\
&\leq 2^{2t-1}\int_{\R^n} \lt|\frac{\|v\|_{2t}}{\|u\|_{2t}}\, |u| - a^{\frac{n}{2t}} v_{a,x_0}(x)\rt|^{2t} dx + \frac{2^{4t-1}\|v\|_{2t}^{2t}}{\|u\|_{2t}^{2t}} \int_{\R^n} u_-^{2t} dx.
\end{align}
Plugging \eqref{eq:onepartsmall} (with remark that the minimum is taken by $u_-$) and \eqref{eq:1f} into \eqref{eq:2ndf} implies \eqref{eq:quantiativeform}.
\end{proof}

\section{Proof of Theorem \ref{2ndTheorem}}
We follow the argument in the proof of Theorem $1.4$ in \cite{Carlen13}. However, our proof below is simpler with the help of Theorem \ref{1stTheorem}. In our proof, we alwas use $C$ to denote a positive constant depending only on $n,t,p,A$ and $B$, and which value can be changed from lines to lines.

\begin{proof}[Proof of Theorem \ref{2ndTheorem}]
We divide our proof in several steps.\\

\emph{$\bullet$ Step 1: We show that $\|u\|_{2t}$ cannot be too small if $N_p(u)$ is not too large.} Indeed, for any $R>0$ we have
\[
\int_{B_R}|u|^{t+1} dx = \|u\|_{t+1}^{t+1} - \int_{\{|x|>R\}} |u|^{t+1} dx \geq \|v\|_{t+1}^{t+1} - R^{-p} N_p(u).
\]
Choosing $R >0$ such that $R^{-p}N_p(u) = \|v\|_{t+1}^{t+1}/2$ and using H\"older inequality, we get
\[
\frac12 \|v\|_{t+1}^{t+1} \leq \int_{B_R} |u|^{t+1} dx \leq \|u\|_{2t}^{t+1} |B_R|^{\frac{t-1}{2t}} = \|u\|_{2t}^{t+1} (|B_1|\, R^n)^{\frac{t-1}{2t}},
\]
that is 
\begin{equation}\label{eq:lowerbound2tnorm}
\|u\|_{2t}^{2t} \geq c_1 N_p(u)^{-\frac{n(t-1)}{p(t+1)}}\geq c_1 B^{-\frac{n(t-1)}{p(t+1)}},
\end{equation}
with $c_1$ depends only on $n$ and $t$.\\

\emph{$\bullet$ Step 2: Modifying $u$ by multiple and rescale which do not seriously affect the size of deficit $\hatde[u]$.} Define
\[
\tilde u(x) = \frac{\|v\|_{2t}}{\|u\|_{2t}} a^{\frac n{2t}} u(a x),
\]
where $a >0$ is choosen such that
\[
\frac{t^2 -1}{2n} \|\na \tilde u\|_2^2 = \|\tilde u\|_{t+1}^{t+1}.
\]
Note that $\|\tilde u\|_{2t} = \|v\|_{2t}$, and 
\begin{equation}\label{eq:deficitutilde}
\hatde[\tilde u] =\lt(\frac{\|v\|_{2t}}{\|u\|_{2t}}\rt)^{\frac{4t}{2(t)}}\, \hatde[u].
\end{equation}
By \emph{Step 1} (or \eqref{eq:lowerbound2tnorm}), there is a constant $C >0$ such that
\begin{equation}\label{eq:sosanhdelta}
\hatde[\tilde u] \leq C \hatde[u].
\end{equation}

\emph{$\bullet$ Step 3: Application of Theorem \ref{1stTheorem}.} We first claim that
\begin{equation}\label{eq:claim*}
|\|\tilde u\|_{t+1}^{t+1} -\|v\|_{t+1}^{t+1}| \leq C \hatde[u],
\end{equation}
with $C$ depends on $n,t, p$ and $B$. Indeed, by the definition of $\tilde u$, we have
\[
\frac{t^2 -1}{2n} \|\na \tilde u\|_2^2 = \|\tilde u\|_{t+1}^{t+1},
\]
thus
\[
\hatde[\tilde u] = A_{n,t}^{\frac{4t}{2(t)}} \lt(\frac{2n}{t^2-1}\rt)^{\frac{4\theta t}{2(t)}} \lt|\|\tilde u\|_{t+1}^{t+1} -\|v\|_{t+1}^{t+1} \rt|.
\]
This and \eqref{eq:sosanhdelta} imply the claim \eqref{eq:claim*}. Observe that
\[
\|v\|_{t+1}^{t+1} = \|u\|_{t+1}^{t+1} = a^{\frac{n(t-1)}{2t}}\, \frac{\|u\|_{2t}^{t+1}}{\|v\|_{2t}^{t+1}}\, \|\tilde u\|_{t+1}^{t+1}.
\]
Combining this equality and \eqref{eq:claim*}, there exists $\de' >0$ depending on $n,t, p$ and $B$ such that whenever $\hatde[u]\leq \de'$, we have
\begin{equation}\label{eq:lamu2t}
\lt|\lambda^{\frac{n(t-1)}{2t}} \|u\|_{2t}^{t+1} -\|v\|_{2t}^{t+1}\rt|\leq C \hatde[u].
\end{equation} 

By Theorem \ref{1stTheorem}, we can find $x_0\in \R^n$ such that the translation $\hat u(x) = \tilde u(x-x_0)$ of $\tilde u$ satisfies
\begin{equation*}
\int_{\R^n} \lt|\hat u^{\frac{t+1}2} - v^{\frac{t+1}2}\rt|^2 dx \leq K \hatde[\tilde u] \leq C\hatde[u].
\end{equation*}
Hence, by H\"older inequality, we have
\begin{align}\label{eq:translate}
\int_{\R^n} |\hat u^{t+1} -v^{t+1}| dx &= \lt(\int_{\R^n} \lt|\hat u^{\frac{t+1}2} - v^{\frac{t+1}2}\rt|^2 dx\rt)^{\frac12} \lt(\int_{\R^n} \lt|\hat u^{\frac{t+1}2} + v^{\frac{t+1}2}\rt|^2 dx\rt)^{\frac12}\notag\\
&\leq \lt(\|\tilde u\|_{t+1}^{\frac{t+1}2} + \|\tilde v\|_{t+1}^{\frac{t+1}2}\rt) \lt(\int_{\R^n} \lt|\hat u^{\frac{t+1}2} - v^{\frac{t+1}2}\rt|^2 dx\rt)^{\frac12}\notag\\
&\leq C \hatde[u]^{\frac12},
\end{align}
here we use \eqref{eq:claim*} to bound $\|\tilde u\|_{t+1}$ from above when $\hatde[u] \leq \de'$.\\

\emph{$\bullet$ Step 4: Eliminating the normalization.} Setting $u_a(x) = a^{n/(t+1)} u(a x)$. Then we have $\|u_a\|_{t+1} = \|u\|_{t+1}$ and 
\[
\int_{\R^n} \lt|\tilde u^{t+1} -u_a^{t+1}\rt| dx = \frac{|\|u\|_{2t}^{t+1} a^{\frac{n(t-1)}{2t}} -\|v\|_{2t}^{t+1}|}{\|u\|_{2t}^{t+1} a^{\frac{n(t-1)}{2t}}} \|v\|_{t+1}^{t+1}.
\]
From \eqref{eq:lamu2t}, we see that $\|u\|_{2t}^{t+1} a^{\frac{n(t-1)}{2t}} \geq C$. Therefore, again by \eqref{eq:lamu2t},
\begin{equation}\label{eq:1111}
\|\tilde u^{t+1} -u_a^{t+1}\|_1 \leq C \hatde[u].
\end{equation}
Combining \eqref{eq:1111} with \eqref{eq:translate} gives
\begin{equation}\label{eq:2222}
\|u_a(\cdot -x_0)^{t+1}-v^{t+1}\|_1 \leq C \hatde[u]^{\frac12}.
\end{equation}

\emph{$\bullet$ Step 5: Finding a lower bound for $a$:} Notice that from \eqref{eq:lowerbound2tnorm} and \eqref{eq:lamu2t}, we obtain an upper bound depending only on $n,t,p$ and $B$ for $a$ when $\hatde[u]$ smaller than some constant depending only on $n,t,p$ and $B$. In this step, we use the bound on entropy \eqref{eq:bounds} to give a lower bound for $a$.

It follows from \eqref{eq:2222} that 
\[
\int_{B_a(a x_0)} u(x)^{t+1} dx = \int_{B_1} u_a(x-x_0)^{t+1} dx \geq \frac12 \int_{B_1} v(x)^{t+1} dx \geq C.
\] 
Hence by Jensen's inequality, we have
\begin{align*}
&\frac{1}{a^n |B_1|} \int_{B_a(a x_0)} u(x)^{t+1}\, \ln (u(x)^{t+1}) dx\\
&\qquad \geq \frac{1}{a^n |B_1|} \int_{B_a(a x_0)} u(x)^{t+1} dx \, \ln\lt(\frac{1}{a^n |B_1|} \int_{B_a(a x_0)} u(x)^{t+1} dx\rt)\\
&\qquad \geq \frac{C}{a^n} \, \ln\lt(\frac{C}{a^n}\rt).
\end{align*}
Since $b \ln b \geq -1/e$ for any $b >0$, thus we get
\begin{equation}\label{eq:xc}
\int_{B_a(a x_0)} u(x)^{t+1}\, \ln (u(x)^{t+1}) dx \geq -C\lt(1+ \ln a\rt).
\end{equation}
For any nonnegative integrable function $\rho$ on $\R^n$ with finite $p-$moment for some $p\geq 1$, we have the following standard estimate
\[
\int_{\R^n} \rho(x) \, \ln_- \rho(x) dx \leq \int_{\R^n} |x|^p \rho(x) dx + \frac1e\int_{\R^n} e^{-|x|^p}dx,
\]
where $\ln_- s = \max\{-\ln s, 0\}$. Indeed, by writing $\rho(x) = f(x) e^{-|x|^p}$, we have 
\[
\ln_- \rho(x) = \max\{|x|^p -\ln f(x), 0\} \leq |x|^p + \ln_-(f(x)),
\]
hence
\begin{align*}
\int_{\R^n} \rho(x) \, \ln_- \rho(x) dx &\leq \int_{\R^n} |x|^p \rho(x) dx + \int_{\R^n} f(x) \ln_- f(x) e^{-|x|^p} dx \\
& \leq \int_{\R^n} |x|^p \rho(x) dx + \frac1e\int_{\R^n} e^{-|x|^p}dx,
\end{align*}
since $ b \,\ln_- b \leq 1/e$ for any $b>0$. Applying this estimate for $u(x)^{t+1}$
\[
\int_{\R^n} u(x)(x)^{t+1} \, \ln_- (u(x)^{t+1}) dx \leq C,
\]
and hence
\begin{equation}\label{eq:cx}
\int_{\{u(x)\geq 1\}} u(x)^{t+1} \ln (u(x)^{t+1}) dx \leq S(u)+ C.
\end{equation}
From \eqref{eq:xc}, \eqref{eq:cx} and \eqref{eq:bounds}, we obtain $-\ln a \leq C$ for some constant $C$ depending on $n,t,p,B$ and A. This gives us a lower bound for $a$.\\

\emph{Step 6: Reabsorbing $x_0$.} By \eqref{eq:centercond}, we then have $\int_{\R^n} x\, u_a(x)^{t+1} dx =0$, and hence
\[
x_0\|v\|_{t+1}^{t+1} =x_0 \|u\|_{t+1}^{t+1} = \int_{\R^n} x\, u_a(x-x_0)^{t+1} dx =\int_{\R^n} x\,\lt( u_a(x-x_0)^{t+1}-v(x)^{t+1}\rt) dx.
\]
It follows from H\"older inequality and \eqref{eq:2222} that
\begin{align*}
|x_0|\|v\|_{t+1}^{t+1}&\leq \|u_a(\cdot -x_0)^{t+1} -v^{t+1}\|_1^{\frac{p-1}{p}} \lt(N_p(u_a(\cdot-x_0))^{\frac1p} + N_p(v)^{\frac1p}\rt)\\
&\leq C \hatde[u]^{\frac{p-1}{2p}}\lt(N_p(v) + |x_0| \|u\|_{t+1}^{\frac{t+1}p} + a^{-1} N_p(u)^{\frac1p}\rt).
\end{align*}
Therefore, $|x_0|\leq C \hatde[u]^{(p-1)/(2p)}$ whenever $\hatde[u]$ small since $a$ is bounded from below as in \emph{Step 5}.

By the fundamental theorem of calculus and H\"older inequality,
\[
\|\hat u^{t+1} -\tilde u^{t+1}\|_1 = \|\tilde u^{t+1} -\tilde u(\cdot-x_0)^{t+1}\|_1 \leq (t+1) |x_0| \|\na \tilde u\|_2 \|\tilde u\|_{2t}^t.
\]
Since $\tilde u$ satisfies \eqref{eq:conditiononu}, and $\hatde[\tilde u] \leq C \hatde[u]$, then $\|\na \tilde u\|_2 \leq C$. Notice that $\|\tilde u\|_{2t} = \|v\|_{2t}$, hence using the bound on $x_0$ above, we obtain
\[
\|\hat u^{t+1} -\tilde u^{t+1}\|_1 \leq C \hatde[u]^{\frac{p-1}{2p}}.
\]
Combining this estimate and \eqref{eq:translate} gives
\[
\|\tilde u^{t+1} -v^{t+1}\|_1 \leq C \hatde[u]^{\frac{p-1}{2p}}.
\]
Finally, this estimate and \eqref{eq:1111} imply
\[
\|u_a^{t+1} -v^{t+1}\|_1 \leq C \hatde[u]^{\frac{p-1}{2p}},
\]
which is equivalent to \eqref{eq:2ndstability}.

\end{proof}

\section*{Acknowledgments}
This work was supported by the CIMI's postdoctoral research fellowship.

\end{document}